\date{}
\newcommand{\R}{\mathbb{R}}
\newcommand{\re}{\mathbb{R}}
\newcommand{\Z}{\mathbb{Z}}
\newcommand{\ep}{\varepsilon}
\newcommand{\PMF}{\mathbb{PMF}}
\newcommand{\SPMF}{\mathbb{SPMF}}
\newcommand{\RSPMF}{\mathbb{RSPMF}}
\newcommand{\RSPM}{\mathbb{RSPM}}
\newcommand{\DPMF}{\mathbb{DPMF}}
\newcommand{\RDPMF}{\mathbb{RDPMF}}
\newcommand{\PJ}{P\!J}
\newcommand{\JF}{\mathbb{JF}}
\newcommand{\uep}{u_{\ep}}
\newcommand{\glim}{\Gamma\mbox{--}\!\!\lim}
\newcommand{\loc}{_{\mathrm{loc}}}
\newcommand{\omep}{\omega_\ep}
\newcommand{\logep}{|\log\varepsilon|}
\newcommand{\phihat}{\widehat{\varphi}}
\newcommand{\Ft}{F_\theta}
\newcommand{\St}{\widehat{S}_\theta}
\newcommand{\At}{A_\theta}
\newcommand{\gt}{g_\theta}
\newcommand{\ft}{f_\theta}
\newcommand{\Imm}{\operatorname{Im}}
\newtheorem{thm}{Theorem}[section]
\newtheorem{rmk}[thm]{Remark}
\newtheorem{prop}[thm]{Proposition}
\newtheorem{defn}[thm]{Definition}
\newtheorem{lemma}[thm]{Lemma}
\newtheorem{open}{Open problem}
\title{Symmetry breaking for local minimizers of a free discontinuity problem}
\author{
Massimo Gobbino\vspace{1ex}\\ 
{\normalsize Università di Pisa} \\
{\normalsize Dipartimento di Matematica}\\ 
{\normalsize PISA (Italy)}\\  
{\normalsize e-mail: \texttt{massimo.gobbino@unipi.it}}
\and
Nicola Picenni\vspace{1ex}\\ 
{\normalsize Università di Pisa} \\
{\normalsize Dipartimento di Matematica}\\ 
{\normalsize PISA (Italy)}\\
{\normalsize e-mail: \texttt{nicola.picenni@unipi.it}}
}
\begin{document}

\maketitle

\begin{abstract}

We study a functional defined on the class of piecewise constant functions, combining a jump penalization, which discourages discontinuities, with a fidelity term that penalizes deviations from a given linear function, called the forcing term.

In one dimension, it is not difficult to see that local minimizers form staircases that approximate the forcing term. Here we show that in two dimensions symmetry breaking occurs, leading to the emergence of exotic minimizers whose level sets are not simple stripes with boundaries orthogonal to the gradient of the forcing term.

The proof relies on a suitable adaptation of the calibration method for free discontinuity problems; as a side benefit, our version requires less regularity than the classical one.
\bigskip

\vspace{6ex}

\noindent{\bf Mathematics Subject Classification 2020 (MSC2020):} 
49Q20, 49K05, 49K10

\vspace{6ex}


\noindent{\bf Key words:} 
Symmetry breaking, entire local minimizers, free discontinuity problem, calibration, Perona-Malik functional.

\end{abstract}


\section{Introduction}

Let $(a,b)\subseteq\re$ be an interval, and let $u:(a,b)\to\re$ be a function. We say that $u$ is a ``pure jump'' function in $(a,b)$, and we write $u\in\PJ((a,b))$, if there exist a real number $c$, a (finite or countable, and possibly also empty) subset $S_u\subseteq(a,b)$, and a function $J:S_u\to\mathbb{R}\setminus\{0\}$ such that
\begin{equation*}
\sum_{x\in S_u} |J(x)| < +\infty,
\end{equation*}
and
\begin{equation}
u(x) = c + \sum_{\substack{y\in S_u \\ y\leq x}} J(y),
\qquad \forall x\in(a,b).
\label{defn:PJ-1d}
\end{equation}

We call $\PJ\loc(\re)$ the set of all functions $u:\re\to\re$ whose restriction to every interval $(a,b)$ belongs to $\PJ((a,b))$. The space $\PJ\loc(\mathbb{R})$ naturally generalizes piecewise constant functions, and can also be characterized as the set of functions in $BV\loc(\re)$ whose distributional derivative is purely atomic. 

It is not difficult to verify that the representation~(\ref{defn:PJ-1d}) is unique for every function $u \in \PJ((a,b))$. Specifically, the constant $c$ equals the limit of $u(x)$ as $x \to a^+$, the set $S_u$ consists of the discontinuity points of $u$ and, for each $x \in S_u$, the function $J(x)$ is the jump of $u$ at $x$, that is, the difference between the right and left limits of $u$.

We refer to the elements of $S_u$ as the \emph{jump points} of $u$, and to $|J(x)|$ as the \emph{jump height} at $x$. This quantity also coincides with the difference between the upper and lower approximate limits of $u$ at $x$, denoted by $u^+(x)$ and $u^-(x)$, respectively.

Given the real parameters
\begin{equation}
\theta\in[0,1), 
\qquad 
\alpha>0, 
\qquad 
\beta>0, 
\qquad 
M\neq 0,
\label{hp:tabM}
\end{equation}
we introduce the jump functional with fidelity term
\begin{equation}
\JF_{\theta,\alpha,\beta,M}(\Omega,u) =
\alpha\sum_{x\in S_u\cap\Omega} |u^+(x)-u^-(x)|^\theta +
\beta\int_\Omega (u(x)-Mx)^2\,dx,
\label{defn:JF-1d}
\end{equation}
defined for every open set $\Omega\subseteq\mathbb{R}$ and every $u\in\PJ\loc(\mathbb{R})$, with values in nonnegative real numbers or even $+\infty$, because the first term might be a diverging series. This functional, extended to $+\infty$ when $u\not\in\PJ\loc(\re)$, is lower semicontinuous with respect to convergence in $L^2(\Omega)$, and more generally in every space $L^p(\Omega)$. 

Minimizing this functional involves a competition between the sum, a sort of regularizing term that penalizes jumps, and the integral, which encourages $u$ to approximate the function $f(x) := Mx$. Consequently, we refer to $f(x)$ as the \emph{forcing term} and to the integral as the \emph{fidelity term}. Notably, in the limiting case $\theta=0$, the sum simply counts the number of jump points of $u$ in $\Omega$, while for $\theta=1$ (which is excluded in (\ref{hp:tabM}) because in that case the functional is not lower semicontinuous) it would represent the total variation of $u$ in $\Omega$.

An \emph{entire local minimizer} of \eqref{defn:JF-1d} is any function $u\in\PJ\loc(\mathbb{R})$ satisfying
\begin{equation*}
\JF_{\theta,\alpha,\beta,M}(\Omega,u) \leq 
\JF_{\theta,\alpha,\beta,M}(\Omega,v)
\end{equation*}
for every open set $\Omega\subseteq\re$, and every $v\in\PJ\loc(\mathbb{R})$ that coincides with $u$ outside a compact subset of $\Omega$.

In one dimension, entire local minimizers can be described rather easily. As we establish in Theorem~\ref{thm:main-1d}, their graphs are ``staircases'' that follow the profile of the forcing term, with steps whose length and height are determined solely by the parameters \eqref{hp:tabM}.

The problem can be generalized to dimensions $d\geq 2$. To this end, we consider the space $\PJ\loc(\mathbb{R}^d)$ of pure jump functions in $\mathbb{R}^d$. Even if this space lacks an elementary representation as \eqref{defn:PJ-1d}, any such function has a jump set $S_u$, which is a $(d-1)$-rectifiable subset of $\mathbb{R}^d$, and a well-defined jump height $|u^+(x)-u^-(x)|$ that is measurable with respect to the $(d-1)$-dimensional Hausdorff measure $\mathcal{H}^{d-1}$ restricted to $S_u$. For the precise functional setting, we refer to Section~\ref{sec:statements} below.

With this notation, the natural generalization of \eqref{defn:JF-1d} is the functional
\begin{equation}
\JF_{\theta,\alpha,\beta,\xi}(\Omega,u) =
\alpha\int_{S_u\cap\Omega} |u^+(x)-u^-(x)|^\theta\,d\mathcal{H}^{d-1} +
\beta\int_\Omega (u(x)-\langle\xi,x\rangle)^2\,dx,
\label{defn:JF-dd}
\end{equation}
defined for every open set $\Omega\subseteq\mathbb{R}^d$ and every $u\in\PJ\loc(\mathbb{R}^d)$. Here, $\theta$, $\alpha$, and $\beta$ are as in \eqref{hp:tabM}, with $\xi\in\mathbb{R}^d\setminus\{0\}$, and $\langle\xi,x\rangle$ denoting the scalar product between $\xi$ and $x$.

The forcing term $f(x) := \langle\xi,x\rangle$ has a one-dimensional profile in the direction of $\xi$, which initially suggested that entire local minimizers might retain a one-dimensional structure in the same direction. However, our main result demonstrates that this is not always (and probably never) the case. Indeed, in Theorem~\ref{thm:main-dd} we show that, in the case $\theta=0$, there exist entire local minimizers that are not one-dimensional. 

This was somewhat surprising, at least to us, but it does not contradict any general principle. In a symmetric problem, symmetry ensures that the symmetric transformation of any solution is still a solution, but it does not necessarily imply that every solution itself must preserve the same symmetries.

\paragraph{\textit{\textmd{Motivation in one dimension}}}

Our interest for this problem originated from our asymptotic analysis of the staircasing phenomenon for the Perona-Malik functional. In order to explain this connection, let us start by considering in one dimension the Perona-Malik functional with fidelity term
\begin{equation}
\PMF(u):=
\int_0^1 \log(1+ u'(x)^2)\,dx +
\beta \int_0^1 (u(x)-f(x))^2\,dx,
\label{defn:PMF}
\end{equation}
where $\beta$ is a positive real number, and $f\in L^2((0,1))$ is a given forcing term. Since the function $p\mapsto\log(1+p^2)$ is not convex and, even more important, its convex hull is identically zero, it is well-known that
$$\inf\left\{\PMF(u):u\in C^{1}([0,1])\right\}=0
\qquad
\forall f\in L^{2}((0,1)).$$

Therefore, in order to obtain more stable models, several regularization of (\ref{defn:PMF}) have been proposed in the last decades. Here we focus on two of them.
\begin{itemize}
    \item The \emph{singular perturbation regularization}, obtained by adding a convex coercive term depending on higher order derivatives. In its simpler version, this leads to the functional
    \begin{equation}
    \SPMF_\ep(u):=
    \int_0^1 \ep^{10}\logep^2 u''(x)^2\,dx+
    \PMF(u),
    \label{defn:SPMF}
    \end{equation}
    defined for every $u\in H^2((0,1))$ and every $\ep\in(0,1)$.

    \item  The \emph{discrete regularization}, obtained by replacing the derivative in (\ref{defn:PMF}) by finite differences. This leads to the functional
    \begin{equation}
    \DPMF_\ep(u):=
    \int_{0}^{1-\ep} \log\left(1+\left(D^\ep u(x)\right)^2\right)\,dx 
    +\beta \int_0^1 (u(x)-f(x))^2\,dx,
    \label{defn:DPMF}
    \end{equation}
    defined for every $\ep\in(0,1)$, where 
    \begin{equation*}
        D^\ep u(x):=\frac{u(x+\ep)-u(x)}{\ep}
        \qquad
        \forall x\in(0,1-\ep)
    \end{equation*}
    is the classical finite difference, and the domain of the functional is now restricted to the functions $u$ that are piecewise constant with respect to the $\ep$-grid, namely such that
    \begin{equation*}
        u(x)=u(\ep\lfloor x/\ep\rfloor)
        \qquad
        \forall x\in[0,1].
    \end{equation*}

\end{itemize}

Both choices lead to well-posed models, in the sense that for every admissible value of $\ep$ the corresponding minimum problem admits at least one solution. On the other hand, the unstable character of (\ref{defn:PMF}) comes back in the limit as $\ep\to 0^+$, so that minimum values tend to~0, minimizers tend to $f$ in $L^2(\Omega)$ and, more important, minimizers develop a microstructure known as \emph{staircasing effect}. A quantitative analysis of this effect was carried on by the authors in~\cite{FastPM-CdV,FastPM-CdV2} for the singular perturbation, and by the second author in~\cite{fastpm-discreto} for the discrete approximation.

In both cases the main idea consists in zooming-in the graph of minimizers within a window of a suitable size $\omep$. More precisely, given a family of minimizers $\{\uep\}$, and a family $x_\ep\to x_0\in(0,1)$, one considers the family of blow-ups
$$v_\ep(y):=\frac{\uep(x_\ep+\omep y)-f(x_\ep)}{\omep}
\qquad\quad
\forall y\in I_\ep:=\left(-\frac{x_\ep}{\omep},\frac{1-x_\ep}{\omep}\right).$$

The choice of $\omep$ depends on the model. In the case of the singular perturbation, the correct choice is $\omep:=\ep|\log\ep|^{1/2}$, and with a change of variable in the integrals one can see that $v_\ep$ minimizes the \emph{rescaled singular perturbation} of the Perona-Malik functional with fidelity term, defined as
\begin{equation*}
    \RSPMF_\ep(I_\ep,v):=\RSPM_\ep(I_\ep,v)+
    \beta \int_{I_\ep}(v(y)-f_\ep(y))^2\,dy,
\end{equation*}
where
\begin{equation*}
    \RSPM_\ep(\Omega,v):=\int_{\Omega} \left\{\ep^6 v''(y)^2+ 
    \frac{1}{\omep^2} \log\left(1+v'(y)^2\right)\right\} dy, 
\end{equation*}
and the new forcing term is
\begin{equation}
    f_\ep(y):=\frac{f(x_\ep+\omep y)-f(x_\ep)}{\omep}
    \qquad
    \forall y\in I_\ep.
    \label{defn:fep}
\end{equation}

Now, let us assume that $f$ is of class $C^1$. Then, by passing to the limit in (\ref{defn:fep}), we obtain that $f_\ep(y)$ tends to the linear function $y\mapsto f'(x_0)y$ uniformly on bounded sets. Under this assumption, one can establish two key results (see~\cite[Theorem~3.2 and Proposition~4.6]{FastPM-CdV}).
\begin{itemize}

    \item (Gamma convergence). There exists a positive constant $\alpha$  such that, for every bounded open set $\Omega\subseteq\re$,
    \begin{equation}
        \glim_{\ep\to 0^+}\RSPMF_\ep(\Omega,v)=
        \JF_{1/2,\alpha,\beta,f'(x_0)}(\Omega,v).
        \label{Gconv:RSPMF}
    \end{equation}

    \item  (Compactness). The family $\{v_\ep\}$ is relatively compact in $L^2(\Omega)$ for every bounded open set $\Omega\subseteq\re$.

\end{itemize}

After these two key facts have been established, one can conclude in a rather standard way that every limit point of ${v_\ep}$ is an entire local minimizer of the limit functional, which naturally leads to the problem of classifying such minimizers.

In the case of the discrete approximation the situation is analogous. The correct choice is $\omep=(\ep |\log\ep|)^{1/3}$, in which case $v_\ep$ minimizes the \emph{rescaled discrete} Perona-Malik functional with fidelity term, defined as
\begin{equation*}
    \RDPMF_\ep(I_\ep,v) :=
    \frac{1}{\omep^2}\int_{I_\ep'} \log\left(1+ D^{\ep/\omep} v(x)^2 \right) dx +
    \beta \int_{I_\ep} (v(x)-f_\ep(x))^2\,dx,
\end{equation*}
where, in analogy with the previous case, we have set
\begin{equation*}
    I_\ep:=\left(-\frac{x_\ep}{\omep},\frac{1-x_\ep}{\omep}\right)
    \qquad\text{and}\qquad
    I_\ep':=\left(-\frac{x_\ep}{\omep},\frac{1-\ep-x_\ep}{\omep}\right).
\end{equation*}

Again the family $\{v_\ep\}$ is relatively compact in $L^2(\Omega)$ for every bounded open set $\Omega\subseteq\re$, while now (\ref{Gconv:RSPMF}) becomes
\begin{equation*}
    \glim_{\ep\to 0^+}\RDPMF_\ep(\Omega,v)=
    \JF_{0,\alpha,\beta,f'(x_0)}(\Omega,v).
\end{equation*}

As a consequence, again any limit point of $\{v_\ep\}$ is an entire local minimizer to a functional such as (\ref{defn:JF-1d}), just with exponent $\theta=0$ instead of $\theta=1/2$.

\paragraph{\textit{\textmd{Motivation in higher dimension}}}

The previous theory for the Perona-Malik functional can be extended to higher dimension. The Perona-Malik functional can be defined in analogy with (\ref{defn:PMF}), just by replacing the interval $(0,1)$ with a product of intervals or a suitable bounded open set, and $u'(x)$ with the norm of the gradient of $u$. The singular perturbation approximation can be defined in analogy with (\ref{defn:SPMF}), just by replacing $|u''(x)|$ with some norm of the Hessian matrix of $u$. The discrete approximation can be defined in analogy with (\ref{defn:DPMF}) by exploiting some discrete version of the gradient.

We never wrote down the details explicitly, but at least in the case of the singular perturbation, both the Gamma-convergence (see~\cite{2014-M3AS-BelChaGol,PicNic:PhD,Solci}) and the compactness results should still hold, although the proof involves additional technical difficulties. This would suffice to show that, in any space dimension, the limits of blow-ups of minimizers are again entire local minimizers of the functional~\eqref{defn:JF-dd}, with $\theta = 1/2$ and $\xi = \nabla f(x_0)$, where $x_0$ is the limit of the centers of the zoom-in windows. This, in turn, motivates the classification of such entire local minimizers. However, the appearance of the exotic candidates presented in this paper (as well as others whose existence we suspect) significantly complicates this crucial step, even in two dimensions.

\paragraph{\textit{\textmd{Overview of the technique}}}

The characterization of entire local minimizers in one dimension (Theorem~\ref{thm:main-1d}) is essentially an extension of \cite[Proposition~4.5]{FastPM-CdV} to more general exponents, and can be established through fairly elementary arguments, as was done in that earlier work.

In higher dimensions, we rely on two distinct tools. The first is the \emph{slicing technique} (see Proposition~\ref{prop:slicing}), which is effective when we start with an entire local minimizer in $\re^{d_1}$ and wish to extend it to $\re^{d_1 + d_2}$ by simply ignoring the additional variables. This method applies both when proving that staircases are entire local minimizers in any space dimension, and when extending our exotic minimizers from dimension $d = 2$ to dimensions $d \geq 3$.

The second tool is the \emph{calibration method}, originally introduced in the context of free discontinuity problems by G.~Alberti, G.~Bouchitté, and G.~Dal Maso in~\cite{2003-CalcVar-ABDM}. In a nutshell, the idea is to define a new functional $\mathbb{G}(\Omega, u)$ as the flux of a vector field $\Phi$ across the boundary of the hypograph of $u$ in $\Omega$. The key point lies in choosing the vector field $\Phi$ so that the following three conditions are met.
\begin{itemize}

\item \textit{Divergence-free.} The field $\Phi$ must be divergence free. This ensures that $\mathbb{G}(\Omega, v) = \mathbb{G}(\Omega, w)$ whenever $v$ and $w$ coincide in a neighborhood of $\partial\Omega$.

\item \textit{Lower bound}. For every $v \in \PJ_{\mathrm{loc}}(\re^d)$, one requires $\mathbb{G}(\Omega, v) \leq \JF(\Omega, v)$ (here for the sake of shortness we do not write all parameters as in (\ref{defn:JF-dd})). This typically leads to a set of inequalities that the components of $\Phi$ must satisfy.

\item \textit{Matching on the candidate}. For the candidate $u$ to be an entire local minimizer, one requires $\mathbb{G}(\Omega, u) = \JF(\Omega, u)$. This typically results in equalities that must be satisfied by the components of $\Phi$.

\end{itemize}

These three conditions together imply that
\begin{equation}
    \JF(\Omega,v)\geq \mathbb{G}(\Omega,v)=\mathbb{G}(\Omega,u)=\JF(\Omega,u) \label{osculating}
\end{equation}
for every function $v$ that coincides with $u$ in a neighborhood of $\partial\Omega$, which is enough to prove that $u$ is actually an entire local minimizer.

In this paper, we apply the calibration method in two distinct contexts. The first is to provide an alternative proof that staircases are entire local minimizers in one dimension. In this case, we need a divergence-free vector field in $\re^2$, and any such field can be written as the rotated gradient of a scalar function $F$. Thus, in this model case, the entire construction reduces to finding a scalar function $F$ of two variables that satisfies a suitable system of equalities and inequalities (see Proposition~\ref{prop:calibr-1d}).

The second use of the calibration method occurs in verifying that certain exotic ``double staircases'' are indeed entire local minimizers in $\re^2$ for $\theta = 0$, as needed in the proof of Theorem~\ref{thm:main-dd}. Here, we need a divergence-free vector field in $\re^3$, and it is well known that such a field can be expressed as the curl of another vector field of the form $(A, B, 0)$. The one-dimensional construction guides our choice of components: specifically, we set $B(x, y, z) = F(x, z)$, where $F$ is the same function used in the calibration of one-dimensional staircases. This reduces the problem to selecting the function $A$ (see Proposition~\ref{prop:calibr-dd}).

We emphasize that our approach differs from that in~\cite{2003-CalcVar-ABDM} in a key aspect that could be interesting in itself: rather than focusing directly on the vector field $\Phi$, we work instead with the underlying functions, namely $F$ in one dimension, and $A$ and $B$ in two dimensions. In particular, all the conditions we impose take the form of equalities and inequalities involving these functions themselves, not their derivatives. This leads to significantly weaker regularity requirements. For instance, in the one-dimensional case, we do not even require $F$ to be continuous (see the proof of Proposition~\ref{prop:calibr-1d} and the following Remark~\ref{rmk:ABDM-1d}), whereas in~\cite{2003-CalcVar-ABDM} the components of $\Phi$, which in our setting correspond to derivatives of~$F$, are required to be bounded and approximately continuous. Similarly, in two dimensions, we do not impose any differentiability on $A$, while the approach of~\cite{2003-CalcVar-ABDM} would require its derivatives to satisfy analogous regularity conditions (see Proposition~\ref{prop:calibr-dd} and Remark~\ref{rmk:ABDM-2d}).

\paragraph{\textit{\textmd{Other examples of symmetry breaking}}}

Determining whether the solutions of some problem inherits the same symmetries of the problem itself is a very classical problem in analysis. Several famous examples of symmetry breaking, together with some techniques to prove symmetry, are illustrated in the expository paper~\cite{Kawohl-symmetry_or_not}. Among the classical examples, we mention Newton's body of minimal resistance (see~\cite{Symm-Newton}), the non-symmetric groundstates of~\cite{nonsymm-gs}, and the symmetry breaking for the minimizers of some Poincaré-Wirtinger type inequalities (see~\cite{DGS-Wirtinger,GGR-COCV-2018}).

Further classical examples of symmetry breaking arise from the Steiner problem, which is also an example in which minimality can be proved via calibration (see, for example, \cite{PP-BLMS-2022} and the references therein).

Some more recent results in contexts similar to ours are presented in the series of papers \cite{GR-CalcVar,DR-ARMA,DR-SIAM}, where the authors consider functionals defined on sets involving a competition between a perimeter-type attractive term and a repulsive non-local term. In \cite{DR-CalcVar,DKR-JFA} similar models for diffuse interfaces instead of sets were also considered. In both cases, it turns out that minimizers display one-dimensional patterns (periodic stripes), thus exhibiting less symmetries than the energy.

Finally, we mention the problem of symmetry for optimizers in the Caffarelli-Kohn-Nirenberg inequality \cite{CKN}. After some cases of symmetry breaking were discovered in \cite{CKN1}, the problem of determining the exact range of parameters for which this phenomenon occurs remained open for a while, until it was finally settled in \cite{CKN2}. More recently, the same question has been investigated also for the fractional version of this inequality (see \cite{frac_CKN1,frac_CKN2}), but at present only partial results are available in this direction.

\paragraph{\textit{\textmd{Structure of the paper}}}

This paper is organized as follows. In Section~\ref{sec:statements}, we fix the notation and state our main results. In Section~\ref{sec:1d}, we prove the characterization of entire local minimizers in one dimension, introducing in particular our version of the calibration method in this setting. In Section~\ref{sec:staicases-dd}, we recall the classical slicing technique and apply it to show that staircases remain minimizers in all space dimensions, and to reduce the search for exotic minimizers to dimension two. Section~\ref{sec:exotic} forms the core of the paper: here, we construct asymmetric entire local minimizers in two dimensions and establish their properties via a more delicate calibration, whose construction is nonetheless inspired by the one dimensional case. Finally, in Section~\ref{sec:open}, we present some open problems.


\setcounter{equation}{0}
\section{Notation and statements}\label{sec:statements}

\paragraph{\textmd{\textit{Pure jump functions, jump sets and jump heights}}}

Let $d$ be a positive integer, and let $\Omega\subseteq\re^d$ be an open set. Throughout this paper, we consider the usual space $SBV(\Omega)$ of special bounded variation functions, and the space $GSBV(\Omega)$ of all measurable functions $u:\Omega\to\re$ whose truncations
\begin{equation*}
    u_T(x):=\min\{\max\{u(x),-T\},T\}
    \qquad
    \forall x\in\Omega
\end{equation*}
belong to $SBV(\Omega)$ for every $T>0$. At this point, one can introduce the space
\begin{equation*}
    \PJ(\Omega):=\left\{u\in GSBV(\Omega):\nabla u(x) =0 \mbox{ for almost every }x\in\Omega\strut\right\}
\end{equation*}
of \emph{pure jump} functions in $\Omega$, and finally the space $\PJ\loc(\re^d)$ consisting of all measurable functions $u:\re^d\to\re$ whose restriction to every \emph{bounded} open set $\Omega\subseteq\re^d$ belongs to $\PJ(\Omega)$. For the theory of these function spaces we refer to~\cite{AFP}.

In the sequel we need the fact that, for every function \( u \in \PJ\loc(\mathbb{R}^d) \), the approximate limsup and liminf, denoted by \( u^+(x) \) and \( u^-(x) \), respectively, coincide and are finite for every \( x \in \mathbb{R}^d \) except on a set \( S_u \), called the \emph{jump set} of \( u \).  The jump set \( S_u \) is \((d-1)\)-rectifiable, and the \emph{jump height} \( u^+(x) - u^-(x) \) is measurable with respect to the restriction of the \((d-1)\)-dimensional Hausdorff measure \( \mathcal{H}^{d-1} \) to \( S_u \). As a consequence, the functional~(\ref{defn:JF-dd}) is well-defined (possibly taking the value \( +\infty \)) for every \( u \in \PJ\loc(\mathbb{R}^d) \) and for every admissible choice of the parameters.

Incidentally, we recall that for \( \mathcal{H}^{d-1} \)-almost every \( x \in S_u \), the values \( u^+(x) \) and \( u^-(x) \) also coincide with the approximate limits of \( u \) taken from the two sides of \( S_u \).

\paragraph{\textmd{\textit{Staircases}}}

Let us recall the notation introduced in~\cite{FastPM-CdV} in order to describe staircase-like functions (see \cite[Definitions~2.3 and~2.4]{FastPM-CdV}).

\begin{defn}[Staircases]\label{defn:staircase}
\begin{em}

Let $S:\re\to\re$ be the function defined by
\begin{equation}
S(x):=2\left\lfloor\frac{x+1}{2}\right\rfloor
\qquad
\forall x\in\re,
\label{defn:S(x)}
\end{equation}
where, for every real number $\alpha$, the symbol $\lfloor\alpha\rfloor$ denotes the greatest integer less than or equal to $\alpha$. 
\begin{itemize}

\item For every pair $(H,V)$ of real numbers, with $H>0$, we call \emph{canonical $(H,V)$-staircase} the function $S_{H,V}:\re\to\re$ defined by
\begin{equation}
S_{H,V}(x):=V\cdot S(x/H)
\qquad
\forall x\in\re.
\nonumber
\end{equation}

\item We say that $v$ is an \emph{oblique translation} of $S_{H,V}$ if there exists a real number $\tau_{0}\in[-1,1]$ such that
\begin{equation}
v(x)=S_{H,V}(x-H\tau_{0})+V\tau_{0}
\qquad
\forall x\in\re.
\nonumber
\end{equation}

\item  In dimension $d\geq 2$, the $(H,V)$-canonical staircase in a direction $\xi\in\re^d$, with $\|\xi\|=1$, is the function
\begin{equation*}
    S_{H,V,\xi}(x):=V\cdot S(\langle x,\xi\rangle/H)
    \qquad
    \forall x\in\re^d,
\end{equation*}
and its oblique translations are of the form
\begin{equation*}
    v(x)=S_{H,V,\xi}(x-H\tau_{0}\xi)+V\tau_{0}
    \qquad  
    \forall x\in\re^d.
\end{equation*}

\end{itemize}

\end{em}
\end{defn}

Roughly speaking, the graph of $S_{H,V}$ is a staircase with steps of horizontal length $2H$ and vertical height $2V$. The origin is the midpoint of the horizontal part of one of the steps. The staircase degenerates to the null function when $V=0$, independently of the value of~$H$. Oblique translations correspond to moving the origin along the line $Hy=Vx$. For a pictorial description of these staircases, we refer to Figure~\ref{figure:staircase}, which is taken from~\cite[Figure~2]{FastPM-CdV}.

\begin{figure}[h]
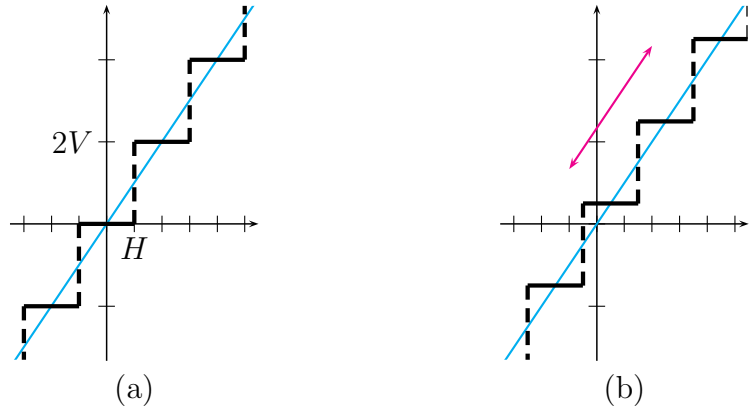


\hfill
\psset{unit=2ex}
\pspicture(-4,-6.5)(6,8.5)

\psline[linewidth=0.7\pslinewidth]{->}(-3.5,0)(5.5,0)
\psline[linewidth=0.7\pslinewidth]{->}(0,-5)(0,8)

\multiput(0,-3)(0,3){4}{\psline[linewidth=0.5\pslinewidth](-0.3,0)(0.3,0)}
\multiput(-3,0)(1,0){9}{\psline[linewidth=0.5\pslinewidth](0,-0.3)(0,0.3)}

\psclip{\psframe[linestyle=none](-3.5,-5)(5.5,8)}
\psline[linecolor=cyan](-4,-6)(6,9)
\multiput(-3,-3)(2,3){4}{\psline[linewidth=2\pslinewidth](0,0)(2,0)}
\multiput(-3,-3)(2,3){5}{\psline[linewidth=2\pslinewidth,linestyle=dashed](0,0)(0,-3)}
\endpsclip

\rput[r](-0.5,3){$2V$}
\rput[t](1,-0.5){$H$}
\rput[t](1,-5.5){(a)}

\endpspicture
\hfill
\pspicture(-4,-6.5)(6,8.5)

\psline[linewidth=0.7\pslinewidth]{->}(-3.5,0)(5.5,0)
\psline[linewidth=0.7\pslinewidth]{->}(0,-5)(0,8)

\multiput(0,-3)(0,3){4}{\psline[linewidth=0.5\pslinewidth](-0.3,0)(0.3,0)}
\multiput(-3,0)(1,0){9}{\psline[linewidth=0.5\pslinewidth](0,-0.3)(0,0.3)}

\psline[linecolor=magenta]{<->}(-1,2)(2,6.5)

\psclip{\psframe[linestyle=none](-3.55,-5)(5.5,8)}
\psline[linecolor=cyan](-4,-6)(6,9)
\multiput(-2.5,-2.25)(2,3){4}{\psline[linewidth=2\pslinewidth](0,0)(2,0)}
\multiput(-2.5,-2.25)(2,3){5}{\psline[linewidth=2\pslinewidth,linestyle=dashed](0,0)(0,-3)}
\endpsclip

\rput[t](1,-5.5){(b)}

\endpspicture
\hfill
\mbox{}

\caption{(a)~Canonical staircase. (b)~Oblique translation with parameter $\tau_{0}=1/2$.}
\label{figure:staircase}
\end{figure}

\paragraph{\textmd{\textit{Main results}}}

The first result of this paper is the complete characterization of entire local minimizers for the functional (\ref{defn:JF-1d}).

\begin{thm}[Entire local minimizers in one dimension]\label{thm:main-1d}

    Let $\theta$, $\alpha$, $\beta$, $M$ be as in (\ref{hp:tabM}). Let us set
    \begin{equation}
        H:=\left(\frac{3(1-\theta)\alpha}{(2|M|)^{2-\theta}\beta}\right)^{1/(3-\theta)}
        \qquad\quad\text{and}\quad\qquad
        V:=MH.
        \label{defn:HV}
    \end{equation}

    Then the set of entire local minimizers of the functional (\ref{defn:JF-1d}) coincides with the set of all oblique translations of the canonical $(H,V)$-staircase. 
\end{thm}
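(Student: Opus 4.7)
The plan is to prove the two implications of the theorem separately, with rather different techniques.

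\emph{Sufficiency (oblique translations of $S_{H,V}$ are entire local minimizers).} I would rely on the calibration approach sketched in the introduction, since the one-dimensional case is the model problem for the tools developed in the paper. Concretely, I would construct a scalar function $F:\mathbb{R}^2\to\mathbb{R}$ whose rotated gradient $(\partial_y F,-\partial_x F)$ is automatically a divergence-free field in the $(x,y)$-plane, and use it to define an auxiliary functional $\mathbb{G}(\Omega,v)$ as the flux of this field across the boundary of the subgraph of $v$. The requirement
\begin{equation*}
\mathbb{G}(\Omega,v)\le\JF_{\theta,\alpha,\beta,M}(\Omega,v)\qquad\text{for every }v\in\PJ\loc(\mathbb{R}),
\end{equation*}
with equality when $v=S_{H,V}$, translates, after splitting into horizontal (graph) and vertical (jump) contributions, into two pointwise constraints on $F$: a fidelity inequality $-\partial_x F(x,y)\le\beta(y-Mx)^2$, with equality on the horizontal pieces of the staircase graph, and a jump inequality $F(x,b)-F(x,a)\le\alpha|b-a|^\theta$ for all $a<b$, with equality at the jumps of $S_{H,V}$. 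I would build $F$ piecewise, exploiting the invariance $(x,y)\mapsto(x+2H,y+2V)$ of the staircase to reduce the construction to a single fundamental cell, and then extend by this invariance.

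\emph{Necessity (every entire local minimizer has this form).} Given an entire local minimizer $u$, I would extract its structure via admissible local perturbations. Inserting, removing, or shifting a single jump, as well as modifying the constant value on a single step, each yield a necessary optimality condition. Together these force $u$ to have a locally finite jump set, jump heights uniformly bounded above and below, and step values $c_i:=u|_{(x_i,x_{i+1})}$ that track the linear profile $Mx$ monotonically. Substituting this structure back into the energy reduces the problem to a scalar optimization over the step length $\ell=x_{i+1}-x_i$ and the step values; the unique solution turns out to be $\ell=2H$ with $c_i=M(x_i+x_{i+1})/2$, which is precisely the defining property of oblique translations of $S_{H,V}$. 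This follows the lines of \cite[Proposition~4.5]{FastPM-CdV}, extended from the special exponent $\theta=1/2$ to arbitrary $\theta\in[0,1)$.

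\emph{Main obstacle.} The value $H$ in \eqref{defn:HV} emerges in both directions as the unique minimizer of the scalar function $H\mapsto\alpha(2|M|H)^\theta/(2H)+\beta M^2H^2/3$ (the "energy per unit length" of a canonical $(H,MH)$-staircase), and this underlying scalar optimization is the common heart of the argument. The hardest technical point will be the global verification of the jump inequality on $F$ in the sufficiency direction: the equalities at the jumps of $S_{H,V}$ only pin down $F$ at specific heights, and one must ensure that $F(x,b)-F(x,a)\le\alpha|b-a|^\theta$ holds for arbitrary $a<b$, including pairs far from those realized by the staircase. A key simplification, emphasized in the introduction, is that our formulation imposes inequalities directly on $F$ rather than on its derivatives, so $F$ need not even be continuous; this provides substantial flexibility when extending the construction from the fundamental cell to the whole plane.
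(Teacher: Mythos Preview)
Your plan follows the same two-part architecture as the paper (calibration for sufficiency, perturbation analysis for necessity), and your identification of the jump inequality on $F$ as the crux is exactly right. Two points of comparison are worth making.

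First, the paper inserts a preliminary homothety step that you omit: rescaling reduces to the normalized parameters $\alpha=2^{2-\theta}/(1-\theta)$, $\beta=3$, $M=1$, so that $H=V=1$ and the candidate is the basic staircase $S(x)$. This costs nothing and substantially cleans up every subsequent formula.

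Second, and more substantively, your plan to build $F$ ``piecewise on a fundamental cell and extend by the $(2H,2V)$-invariance'' differs from what the paper actually does. The paper writes down a single global closed-form expression,
\[
F_\theta(x,z)=\frac{1}{1-\theta}\bigl[(3-\theta)x+\widehat\varphi_\theta(z-x)\bigr],
\]
where $\widehat\varphi_\theta$ is the cubic $\sigma\mapsto(3-\theta)\sigma-(1-\theta)\sigma^3$ truncated at its two stationary points. With this formula the fidelity inequality is immediate (one line), and the hard jump inequality becomes the concrete scalar estimate $\widehat\varphi_\theta(b)-\widehat\varphi_\theta(a)\le 2^{2-\theta}|b-a|^\theta$, which is dispatched by an elementary chain starting from $e^x\ge 1+x$. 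A cell-by-cell construction could in principle be made to work, but the paper also remarks that $F_\theta$ is \emph{forced} to equal this expression throughout the strip $|z-x|\le 1$ covered by the oblique translations, so your piecewise construction would rediscover the same function there; the real content is then the extension outside the strip and the global H\"older bound, which the truncated-cubic formula handles uniformly. Note also a small inconsistency in your write-up: you first phrase the constraints via $\partial_x F$, then say $F$ need not be continuous. The paper resolves this cleanly by stating all four constraints on $F$ in integrated form (differences of values of $F$), never invoking derivatives, and running the proof as a telescoping sum along the graph of the competitor; this is precisely the ``less regularity'' point you allude to at the end.
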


The following remarks clarify some aspects of this result.

\begin{rmk}[Some heuristics]\label{rmk:heuristic}
    \begin{em}
        Let us consider the canonical $(H,V)$-staircase. Each jump has height $2V$, and hence its contribution to the functional (\ref{defn:JF-1d}) is $\alpha(2V)^\theta$. Since the distance between two consecutive jumps is $2H$, we can say that the contribution of jumps, or equivalently of vertical parts of the steps, per unit length is $\alpha(2V)^\theta/(2H)$.

        The contribution of each horizontal step to the fidelity term in (\ref{defn:JF-1d}) is
        \begin{equation*}
            \beta\int_{-H}^H(Mx)^2\,dx=
            \frac{2}{3}\beta M^2 H^3,
        \end{equation*}
        and hence the contribution of the horizontal parts of the steps per unit length is $\beta M^2H^2/3$. If we assume that $V=MH$, which is reasonable if we want a staircase with the same average slope as the forcing term, the sum of the two unitary contributions is
        \begin{equation*}
            \alpha(2H)^{\theta-1}|M|^\theta+\frac{\beta M^2H^2}{3}.
        \end{equation*}

        The value of $H$ that minimizes this expression is exactly the one given in (\ref{defn:HV}).
    \end{em}
\end{rmk}

\begin{rmk}[The limit case $\theta=1$]
    \begin{em}
        The value of $H$ tends to 0 as $\theta \to 1^-$. This aligns with the intuition that, as $\theta$ approaches 1, it becomes increasingly convenient for minimizers to distribute their variation over a greater number of jumps, thereby better adapting to the forcing term. As a further evidence one could prove that, in the limit case $\theta=1$, the unique entire local minimizer is the forcing term $Mx$ itself.
    \end{em}
\end{rmk}

Now we consider the higher dimensional case. The following result, and in particular statement~(2), is the main contribution of this paper.

\begin{thm}[Entire local minimizers in higher dimensions]\label{thm:main-dd}

    Let $d \geq 2$ be an integer, and let $\theta$, $\alpha$, and $\beta$ be as in~\eqref{hp:tabM}. Let $\xi \in \mathbb{R}^d \setminus \{0\}$, set $M := \|\xi\|$, and define $H$ and $V$ as in~\eqref{defn:HV}.

    Then the following statements hold.
    \begin{enumerate}
    \renewcommand{\labelenumi}{(\arabic{enumi})}
        \item All oblique translations of the $(H,V)$-staircase in the direction $\xi/M$ are entire local minimizers of the functional (\ref{defn:JF-dd}).

        \item  If $\theta=0$, then there do exist entire local minimizers of the functional (\ref{defn:JF-dd}) that are not oblique translations of the $(H,V)$-staircase in the direction $\xi/M$.
    \end{enumerate}

\end{thm}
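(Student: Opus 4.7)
For statement~(1), the plan is a direct application of the slicing technique (Proposition~\ref{prop:slicing}), which lifts one-dimensional entire local minimality to $d$-dimensional entire local minimality whenever the candidate depends on only one direction. I would choose an orthonormal basis $e_1:=\xi/M,e_2,\ldots,e_d$: in these coordinates the forcing $\langle\xi,x\rangle$ coincides with $M$ times the first coordinate, and the staircase $S_{H,V,\xi/M}$, together with each of its oblique translations, depends only on the first coordinate. Theorem~\ref{thm:main-1d} provides one-dimensional minimality on each line parallel to $e_1$; Fubini decomposes the fidelity into one-dimensional fidelities along these lines and the $(d-1)$-rectifiability of the jump set controls the jump term slice by slice, so Proposition~\ref{prop:slicing} transfers minimality to dimension $d$.

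For statement~(2), the plan is to exhibit an explicit candidate $u_\star\in\PJ\loc(\re^2)$ that is not an oblique translation of the staircase in direction $\xi/M$, and to certify its minimality by the two-dimensional calibration method of Proposition~\ref{prop:calibr-dd}. By a further application of slicing in the additional $d-2$ coordinates and the rotational covariance of the problem, it suffices to work in dimension $d=2$ with $\xi=(M,0)$. A natural ansatz, consistent with the term ``double staircase'' announced in the introduction, is a piecewise constant function whose jump set combines the usual family of segments orthogonal to $\xi$ with auxiliary segments orthogonal to $e_2$, arranged in a periodic brick-like pattern whose geometric parameters are pinned to $H$ and $V$ so that the fidelity in each fundamental cell is balanced against the extra jump length.

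Proposition~\ref{prop:calibr-dd} reduces the calibration of $u_\star$ to the choice of two scalar functions $A(x,y,z)$ and $B(x,y,z)$ on $\re^3$ whose curl supplies the required divergence-free field. Following the prescription in the introduction I would set $B(x,y,z):=F(x,z)$, with $F$ the scalar function that calibrates the one-dimensional staircase in Proposition~\ref{prop:calibr-1d}; the restriction of the curl of $(A,B,0)$ to each slice $y=\mathrm{const}$ then reproduces the one-dimensional calibration, so the jump portions orthogonal to $\xi$ and the $L^{2}$-fidelity are handled automatically. The remaining task is to construct $A$ so that (i) the matching equalities hold on the horizontal traces of $S_{u_\star}$, (ii) the flux inequalities underpinning the lower bound in~(\ref{osculating}) hold everywhere in $\re^3$, and (iii) the boundary flux compatibility needed to close the chain of inequalities on an arbitrary bounded $\Omega$ is preserved.

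The main obstacle is condition~(ii): the function $A$ must interpolate compatibly with the two-dimensional jump network of $u_\star$ while respecting inequalities that are already tight in the one-dimensional optimization behind Remark~\ref{rmk:heuristic}. The restriction $\theta=0$ is expected to be essential here, since the jump penalization in~(\ref{defn:JF-dd}) then depends only on $\mathcal{H}^{1}(S_{u}\cap\Omega)$ and not on the jump height: this allows the extra horizontal segments of $S_{u_\star}$ to be absorbed into the same inequalities that $F$ already satisfies, whereas for $\theta>0$ they would trigger a height-dependent cost that would break the balance. Once $A$ has been constructed and the three conditions above are verified, the chain of inequalities~(\ref{osculating}) applied on arbitrary bounded $\Omega$ yields that $u_\star$ is an entire local minimizer, and its genuinely two-dimensional structure proves that it is not an oblique translation of $S_{H,V,\xi/M}$.
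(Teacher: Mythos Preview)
Your plan for statement~(1) is correct and matches the paper: the staircase depends only on $\langle x,\xi/M\rangle$, Theorem~\ref{thm:main-1d} gives one-dimensional minimality, and Proposition~\ref{prop:slicing} lifts it to $\re^d$.

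For statement~(2) there is a genuine gap in the choice of the candidate $u_\star$. You propose a ``brick-like pattern'' whose jump set consists only of straight segments orthogonal to $\xi$ and to $e_2$. The paper's actual minimizer---the bi-staircase $\widehat{S}_\theta$ of Definition~\ref{defn:bi-staircase}---is \emph{not} of this form: its jump set contains, besides vertical half-lines, the graph of the \emph{curved} function $f_\theta$ of~(\ref{defn:ft}). This curvature is forced by a first-order condition (Remark~\ref{rmk:bi-staircase}): the interface separating two plateaus must satisfy the Euler--Lagrange equation
\[
\alpha_\theta\Bigl(\tfrac{f_\theta'}{\sqrt{1+(f_\theta')^2}}\Bigr)'=3(1-2x),
\]
together with a weighted balance of tangent vectors at each triple junction, and a flat horizontal segment fails both. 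Equivalently, in the calibration (Proposition~\ref{prop:calibr-dd}) the equality case of~(\ref{th:calib-boundary}) along an interface requires the tangent $\gamma_j'(t)$ to be parallel to the vector $E_j(t)$, whose vertical component $F_\theta(x,1)-F_\theta(x,0)=g_\theta(x)$ is nonconstant in $x$; this pins down the specific profile~(\ref{defn:ft}) and rules out horizontal straight segments. Without the correct curved interface you cannot have both the equality $\mathbb{G}(\Omega,u_\star)=\JF(\Omega,u_\star)$ and the inequality $\mathbb{G}(\Omega,v)\leq\JF(\Omega,v)$ simultaneously.

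Your explanation of why $\theta=0$ is special is also off target. The bi-staircase $\widehat{S}_\theta$ and the calibration framework of Proposition~\ref{prop:calibr-dd} are set up for every $\theta\in[0,1)$, and the jumps across the graph of $f_\theta$ have height~$1$, so there is no ``height-independent'' free lunch. What actually singles out $\theta=0$ is the construction of $A_0$ in Step~5: for $\theta=0$ the right-hand side of~(\ref{hp:ABV<}) is the constant $\alpha_0^2=16$, which allows a piecewise-affine ansatz (Lemma~\ref{lemma:psi}) whose verification reduces, by convexity, to checking finitely many endpoint values. For $\theta>0$ the right-hand side $\alpha_\theta^2|z_2-z_1|^{2\theta}$ is not constant and this reduction breaks down; the extension to $\theta\in(0,1)$ is listed as an open problem in Section~\ref{sec:open}.
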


\begin{rmk}[The trivial forcing term]\label{rmk:M=0}
\begin{em}

In Theorem~\ref{thm:main-1d} we assumed that \( M \neq 0 \), and in Theorem~\ref{thm:main-dd} that \( \xi \neq 0 \); both conditions ensure that the forcing term in (\ref{defn:JF-1d}) and (\ref{defn:JF-dd}) is nontrivial. 

When the forcing term is identically zero, it is straightforward to show that the constant function zero is an entire local minimizer. The question is whether it is the unique one. In one dimension, the answer is affirmative and can be proved as in~\cite[Proposition~4.5]{FastPM-CdV}, although the argument given there is specific to the one-dimensional setting. In higher dimensions, the situation is less clear, and uniqueness does not seem to follow from the techniques developed in this paper.

\end{em}    
\end{rmk}


\setcounter{equation}{0}
\section{The one-dimensional case (proof of Theorem~\ref{thm:main-1d})}\label{sec:1d}

The plan of the proof is the following.

\begin{itemize}
    \item In the first step, we exploit a homothety argument in order to decrease the number of parameters. This allows to reduce ourselves to the case where
    \begin{equation}
        \alpha=\alpha_\theta:=\frac{2^{2-\theta}}{1-\theta},
        \qquad\qquad
        \beta=3,
        \qquad\qquad
        M=1,
        \label{defn:abM-simple}
    \end{equation}
    for which (\ref{defn:HV}) yields $H=V=1$, and therefore the candidates to be entire local minimizers are the basic staircase $S(x)$ defined in (\ref{defn:S(x)}) and its oblique translations.

    \item In the second step, we introduce the calibration method in one dimension. This reduces the problem of verifying that $S(x)$ is an entire local minimizer to finding a function of two variables satisfying a suitable set of equalities and inequalities.

    \item In the third step, we explicitly construct the calibration and verify that it meets the required conditions. This completes the first part of the proof, namely, the fact that all oblique translations of $S(x)$ are entire local minimizers.

    \item In the fourth and final step, we prove the converse: any entire local minimizer must be an oblique translation of $S(x)$.

\end{itemize}

\paragraph{\textit{\textmd{Step 1 -- Reduction of the parameters}}}

Up to replacing $u$ by $-u$, we can always assume that $M>0$. Now let $A$ be a positive real number, and for every $u\in\PJ\loc(\re)$ let us set
\begin{equation*}
    u_A(x):=\frac{A}{M}\cdot u\left(\frac{x}{A}\right)
    \qquad
    \forall x\in\re.
\end{equation*}

One can check that $u_A\in\PJ\loc(\re)$, and $u_A$ has a jump point in $x$ with jump height $J$ if and only if $u$ has a jump point in $Ax$ with jump height $MJ/A$. Combining this remark with a change of variable in the integral of the fidelity term, we deduce that
\begin{equation*}
    \JF_{\theta,\alpha,\beta,M}((-L,L),u)=
    \frac{\beta M^2}{3A^3}\cdot
    \JF_{\theta,\widehat{\alpha},3,1}((-AL,AL),u_A),
\end{equation*}
where
\begin{equation*}
    \widehat{\alpha}:=\frac{3\alpha}{\beta}\cdot\frac{A^{3-\theta}}{M^{2-\theta}}.
\end{equation*}

As a consequence, $u$ is an entire local minimizer for the functional (\ref{defn:PJ-1d}) with parameters $(\theta,\alpha,\beta,M)$ if and only if $u_A$ is an entire local minimizer for the same functional with parameters $(\theta,\widehat{\alpha},3,1)$. In particular, if we choose $A:=1/H$, with $H$ given by (\ref{defn:HV}), we have reduced the problem to showing that the set of entire local minima for the functional (\ref{defn:JF-1d}), with parameters given by (\ref{defn:abM-simple}), coincides with the set of oblique translations of the basic staircase $S(x)$.

\paragraph{\textit{\textmd{Step 2 -- The calibration method in one dimension}}}

The key tool is the following.

\begin{prop}[Calibration in one dimension]\label{prop:calibr-1d}
    Let us assume that there exists a function $\Ft:\re^2\to\re$ that satisfies the following two equalities
    \begin{gather}
        \Ft(2j+1,2j)-\Ft(2j-1,2j)=2
        \qquad
        \forall j\in\Z,
        \label{hp:FO=}
        \\
        \Ft(2j-1,2j)-\Ft(2j-1,2j-2)=\frac{4}{1-\theta}
        \qquad
        \forall j\in\Z,
        \label{hp:FV=}
    \end{gather}
    and the following two inequalities
    \begin{gather}
        \Ft(x_2,z)-\Ft(x_1,z)\leq (x_2-z)^3-(x_1-z)^3
        \qquad
        \forall x_1\leq x_2,
        \quad
        \forall z\in\re,
        \label{hp:FO<}
        \\
        \Ft(x,z_2)-\Ft(x,z_1)\leq
        \frac{2^{2-\theta}}{1-\theta}|z_2-z_1|^\theta
        \qquad
        \forall x\in\re,
        \quad
        \forall (z_1, z_2)\in\R^2.
        \label{hp:FV<}
    \end{gather}

    Then the staircase $S(x)$ of Definition~\ref{defn:S(x)}, together with all its oblique translations, is an entire local minimizer for the functional (\ref{defn:JF-1d}), with parameters given by (\ref{defn:abM-simple}).
\end{prop}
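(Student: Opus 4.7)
The plan is to implement the calibration scheme outlined in the introduction, with $\Ft$ playing the role of the ``primitive'' of the (possibly non-smooth) calibration vector field. For a piecewise constant competitor $v\in\PJ\loc(\re)$ on a bounded interval $\Omega=(a,b)$, with jump points $a=x_0<x_1<\ldots<x_N<x_{N+1}=b$ and values $c_j$ on $(x_{j-1},x_j)$, I would introduce the auxiliary quantity
\begin{equation*}
\mathbb{G}(\Omega,v):=\sum_{j=1}^{N+1}\bigl[\Ft(x_j,c_j)-\Ft(x_{j-1},c_j)\bigr]+\sum_{j=1}^{N}\bigl[\Ft(x_j,c_{j+1})-\Ft(x_j,c_j)\bigr],
\end{equation*}
which morally represents the line integral of $d\Ft$ along the boundary of the subgraph of $v$, traversed horizontally over the steps and vertically across the jumps. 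A direct rearrangement yields the telescoping identity $\mathbb{G}(\Omega,v)=\Ft(b,c_{N+1})-\Ft(a,c_1)$, so that $\mathbb{G}(\Omega,v)$ depends only on the values of $v$ near $\partial\Omega$; in particular $\mathbb{G}(\Omega,v)=\mathbb{G}(\Omega,u)$ whenever $v$ and $u$ coincide in a neighborhood of the endpoints of $\Omega$. The case of competitors with countably many jumps is handled by the same identity, after a standard truncation in which only finitely many jumps are kept active at a time.

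The second step is the lower bound $\mathbb{G}(\Omega,v)\le\JF(\Omega,v)$, obtained term by term. For each horizontal summand, inequality (\ref{hp:FO<}) gives
\begin{equation*}
\Ft(x_j,c_j)-\Ft(x_{j-1},c_j)\le(x_j-c_j)^3-(x_{j-1}-c_j)^3=3\int_{x_{j-1}}^{x_j}(v(x)-x)^2\,dx,
\end{equation*}
and summing over $j$ reproduces the fidelity integral in (\ref{defn:JF-1d}) with $\beta=3$. For each vertical summand, inequality (\ref{hp:FV<}) gives $\Ft(x_j,c_{j+1})-\Ft(x_j,c_j)\le\alpha_\theta|c_{j+1}-c_j|^\theta$, with $\alpha_\theta=2^{2-\theta}/(1-\theta)$, which matches the jump term exactly.

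The third and most delicate ingredient is the matching identity $\mathbb{G}(\Omega,S)=\JF(\Omega,S)$ for the candidate $u=S$. The vertical part is immediate from (\ref{hp:FV=}), since $\alpha_\theta\cdot 2^\theta=4/(1-\theta)$. The horizontal part rests on the key observation that inequality (\ref{hp:FO<}) is precisely the statement that, for each integer $k$, the auxiliary function $g_k(x):=(x-2k)^3-\Ft(x,2k)$ is non-decreasing in $x$, while the equality (\ref{hp:FO=}) forces $g_k(2k+1)=g_k(2k-1)$; hence $g_k$ must be constant on the whole interval $[2k-1,2k+1]$. As a consequence (\ref{hp:FO<}) holds as an equality on every subinterval of the $k$-th step of $S$, which is exactly what is needed for the horizontal terms of $\mathbb{G}(\Omega,S)$ to match the fidelity contribution of $S$, even when $\partial\Omega$ cuts two steps in their interior.

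Combining the three ingredients, for every admissible competitor $v$ coinciding with $S$ near $\partial\Omega$ one obtains the chain
\begin{equation*}
\JF(\Omega,v)\ge\mathbb{G}(\Omega,v)=\mathbb{G}(\Omega,S)=\JF(\Omega,S),
\end{equation*}
which proves that $S$ is an entire local minimizer. The extension to an arbitrary oblique translation of $S$ follows from the invariance of the functional with parameters (\ref{defn:abM-simple}) under the simultaneous shift $x\mapsto x-\tau_0$, $u\mapsto u-\tau_0$, which preserves both the fidelity density $(u-x)^2$ and the family of jump heights. The main obstacle is really the third step: the equality in (\ref{hp:FO<}) on each step of $S$ must be derived purely from the monotonicity built into (\ref{hp:FO<}) and from the boundary identity (\ref{hp:FO=}), without ever differentiating or even assuming continuity of $\Ft$, which is precisely the mechanism that allows our calibration to require less regularity than the one in~\cite{2003-CalcVar-ABDM}.
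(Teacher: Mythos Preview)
Your proof is correct and follows the same calibration/telescoping strategy as the paper. The one substantive difference lies in how you achieve the matching $\mathbb{G}(\Omega,S)=\JF(\Omega,S)$ on the horizontal steps. The paper sidesteps your ``third and most delicate ingredient'' entirely: it restricts attention to the special intervals $(a_k,b_k)=(-2k-1,2k+1)$, whose endpoints are jump points of $S$, so that every step of $S$ inside $\Omega$ is a full step $[2j-1,2j+1]$ and the equality~(\ref{hp:FO=}) applies verbatim; local minimality on arbitrary open sets then follows because these special intervals exhaust $\re$. Your route is to work directly with a generic interval $\Omega$, which forces you to prove that~(\ref{hp:FO<}) is actually an equality on every subinterval of a step. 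Your argument for this --- observing that $g_k(x):=(x-2k)^3-\Ft(x,2k)$ is non-decreasing by~(\ref{hp:FO<}) and has equal values at $2k\pm 1$ by~(\ref{hp:FO=}), hence is constant on $[2k-1,2k+1]$ --- is valid and in fact extracts a strictly stronger consequence from the hypotheses than the paper uses. The paper's choice is a bit cleaner (no extra lemma needed), while yours yields minimality directly on every bounded interval without the exhaustion step.
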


\begin{proof}

To begin with, we observe that $S(x)$ minimizes in some interval $(a,b)$ with respect to its boundary conditions if and only if $S(x-\tau_0)+\tau_0$ minimizes in $(a+\tau_0,b+\tau_0)$. As a consequence, if $S(x)$ is an entire local minimizer, then all its oblique translations are automatically entire local minimizers as well. Therefore, in the sequel we focus on $S(x)$.

For every positive integer $k$, we set $a_k:=-(2k+1)$ and $b_k:=2k+1$. We observe that $a_k$ and $b_k$ are jump points of the staircase $S(x)$, and that $S(x)=-2k$ in a right neighborhood of $a_k$ and $S(x)=2k$ in a left neighborhood of $b_k$. For the sake of shortness, we simply write $\JF(\Omega,u)$ to denote the functional (\ref{defn:JF-1d}) with parameters given by (\ref{defn:abM-simple}).

We claim that
\begin{eqnarray}
    \JF((a_k,b_k),v)\geq
    \Ft(b_k,2k)-\Ft(a_k,-2k)= 
    \JF((a_k,b_k),S)
    \label{th:prop-1d}
\end{eqnarray}
for every function $v\in PJ((a_k,b_k))$ that coincides with $S$ outside a compact subset of $(a_k,b_k)$. Since the intervals of the form $(a_k,b_k)$ exhaust the whole real line, this is enough to prove that $S$ is an entire local minimizer.

To begin with, we consider the case in which the jump set of $v$ is finite, and consists of the points $x_1<\ldots<x_m$ for some positive integer $m$. We set $x_0:=a_k$ and $x_{m+1}:=b_k$, and for every $i\in \{0,1,\dots,m\}$ we call $v_i$ the value of $v$ in the interval $(x_i,x_{i+1})$.

With these notations we obtain that
\begin{eqnarray*}
\JF((a_k,b_k),v) & = & 
\frac{2^{2-\theta}}{1-\theta} \sum_{i=0} ^{m-1} |v_{i+1}-v_{i}|^\theta 
+\sum_{i=0} ^{m} 3\int_{x_i} ^{x_{i+1}} (v_i-x)^2\,dx
\\[0.5ex]
&=& 
\frac{2^{2-\theta}}{1-\theta} \sum_{i=0}^{m-1} |v_{i+1}-v_{i}|^\theta + \sum_{i=0} ^{m}\left[(x_{i+1}-v_i)^3 - (x_i-v_i)^3\right].
\end{eqnarray*}

Now we estimate from below the terms of the first sum by exploiting inequality(\ref{hp:FV<}) with $(x,z_1,z_2):=(x_{i+1},v_i,v_{i+1})$, and the terms of the second sum by exploiting inequality (\ref{hp:FO<}) with $(x_i,x_{i+1},v_i)$ instead of $(x_1,x_2,z)$. We deduce that
\begin{eqnarray*}
\JF((a_k,b_k),v) & \geq & 
\sum_{i=0} ^{m-1} \left[\Ft (x_{i+1},v_{i+1})-\Ft(x_{i+1},v_{i})\right]
+ \sum_{i=0} ^{m} \left[\Ft(x_{i+1},v_i)- \Ft(x_{i},v_i)\right]
\\
& = & \Ft(x_{m+1},v_m)-\Ft(x_0,v_0)
\\
& = & \Ft(b_k,2k)-\Ft(a_k,-2k),
\end{eqnarray*}
where the last equality follows from the fact that $v(x)$ coincides with $S(x)$ near the endpoints $a_k$ and $b_k$. This proves the inequality in (\ref{th:prop-1d}).

On the other hand, exploiting a similar telescopic structure, we can write
\begin{eqnarray*}
\Ft(b_k,2k)-\Ft(a_k,-2k) & = & \Ft(2k+1,2k)-\Ft(-2k-1,-2k)   
\\
& = & 
\sum_{j=-k}^{k}\left[\Ft(2j+1,2j)-\Ft(2j-1,2j)\right]
\\
&   & 
\mbox{}+\sum_{j=-k+1}^{k} \left[\Ft(2j-1, 2j)-\Ft(2j-1,2j-2)\right].
\end{eqnarray*}

From (\ref{hp:FO=}) we deduce that 
\begin{equation*}
    \Ft(2j+1,2j)-\Ft(2j-1,2j)=
    2=
    3\int_{2j-1}^{2j+1}(2j-x)^2\,dx,
\end{equation*}
and hence the first sum is equal to the fidelity term of $\JF((a_k,b_k),S)$. From (\ref{hp:FV=}) we deduce that the second sum has $2k$ terms, all of which are equal to $4/(1-\theta)$, and hence the second sum is equal to
\begin{equation*}
    2k\cdot\frac{4}{1-\theta}=
    2k\cdot\frac{2^{2-\theta}}{1-\theta}\cdot 2^\theta,
\end{equation*}
which is exactly the contribution of jump points to $\JF((a_k,b_k),S)$. This proves the equality in (\ref{th:prop-1d}).

Finally, the general case in which $v$ has infinitely many jump points follows by a standard approximation argument, because functions with a finite number of jump points are dense in energy. More precisely, any $v\in PJ((a_k,b_k))$ which coincides with the staircase $S$ in a neighborhood of the endpoints can be approximated with a sequence $\{v_n\}\subseteq PJ((a_k,b_k))$ of functions with finitely many jump points and coinciding with $S$ in a neighborhood of the endpoints, in such a way that
$$\lim_{n\to +\infty} \JF((a_k,b_k),v_n) =
\JF((a_k,b_k),v).$$

This is enough to conclude that (\ref{th:prop-1d}) holds also in the general case.
\end{proof}

\begin{rmk}[Geometric interpretation]\label{rmk:calibration-1d}
\begin{em}

The telescopic argument in the proof can be resumed informally as follows.

The idea is to regard the staircase \( S \) and its competitor \( v \) as unions of horizontal and vertical segments in the plane. The central term in (\ref{th:prop-1d}) is equal to the difference between the values of \( \Ft \) at the points \( (a_k, -2k) \) and \( (b_k, 2k) \), which are the common endpoints of both \( S \) and \( v \). This difference can, in turn, be decomposed as the sum of the differences computed in each horizontal and vertical segment along the paths defined by \( S \) and \( v \).

For the staircase function \( S \), the equalities (\ref{hp:FO=}) and (\ref{hp:FV=}) imply that each of these contributions exactly matches the corresponding term in \( \JF((a_k, b_k), S) \). For the competitor \( v \), the inequalities (\ref{hp:FO<}) and (\ref{hp:FV<}) show that each segment contributes less than or equal to its counterpart in \( \JF((a_k, b_k), v) \). This justifies the inequality in (\ref{th:prop-1d}).

\end{em}    
\end{rmk}

\begin{rmk}[Comparison with~\cite{2003-CalcVar-ABDM}]\label{rmk:ABDM-1d}
\begin{em}

We point out that in Proposition~\ref{prop:calibr-1d} we did not assume any regularity of \( \Ft \). If \( \Ft \) is sufficiently regular, we can define a differential form \( \omega \) and a vector field \( \Phi \) by
\begin{equation*}
    \omega := \frac{\partial \Ft}{\partial x}(x,z)\,dx +
              \frac{\partial \Ft}{\partial z}(x,z)\,dz
    \qquad\text{and}\qquad
    \Phi(x,z) := \left(\frac{\partial \Ft}{\partial z}(x,z),
                        -\frac{\partial \Ft}{\partial x}(x,z)\right).
\end{equation*}

In this case, the differences in values of \( \Ft \) can be interpreted as line integrals of \( \omega \) along the segments, or equivalently as the flux of \( \Phi \) across the same segments (with suitable orientations). This observation connects our approach with the one in~\cite{2003-CalcVar-ABDM}.

\end{em}    
\end{rmk}

\paragraph{\textit{\textmd{Step 3 -- Construction of the calibration}}}

Let us consider the cubic
\begin{equation}
    \varphi_\theta(\sigma):=(3-\theta)\sigma-(1-\theta)\sigma^3.
    \label{defn:cubic-true}
\end{equation}

An elementary calculation shows that it is an increasing function in the interval between its two stationary points $\pm\sigma_\theta$, where
\begin{equation*}
    \sigma_\theta:=\sqrt{\frac{3-\theta}{3(1-\theta)}}.
\end{equation*}

At this point we can introduce the \emph{truncated cubic} $\phihat_\theta:\re\to\re$ defined as
\begin{equation}
    \phihat_\theta(\sigma):=
    \begin{cases}
        \varphi_\theta(-\sigma_\theta)\quad &  
        \text{if }\sigma\leq -\sigma_\theta,
        \\
        \varphi_\theta(\sigma) &
        \text{if }\sigma\in [-\sigma_\theta,\sigma_\theta],
        \\
        \varphi_\theta(\sigma_\theta) &  
        \text{if }\sigma\geq \sigma_\theta,
        \label{defn:cubic}
    \end{cases}
\end{equation}
and finally the function
\begin{equation}
    \Ft(x,z):=
    \dfrac{1}{1-\theta}\left[(3-\theta)x+\phihat_\theta(z-x)\right]
    \qquad
    \forall (x,z)\in\re^2.
    \label{defn:calibration-1d}
\end{equation}

We observe that $\Ft$ is piecewise $C^\infty$, and of class $C^1$ on the whole $\re^2$, because the truncation of the cubic was performed at its stationary points.

We claim that $\Ft$ satisfies (\ref{hp:FO=}) through (\ref{hp:FV<}). The verification of the equalities (\ref{hp:FO=}) and (\ref{hp:FV=}) is immediate from (\ref{defn:calibration-1d}), (\ref{defn:cubic}), and the fact that $\sigma_\theta\geq 1$. As for (\ref{hp:FO<}), we observe that for every $z\in\re$ the partial derivative of (\ref{defn:calibration-1d}) with respect to $x$ is given by
\begin{equation*}\label{eq:dF/dx}
    \frac{\partial\Ft}{\partial x}(x,z)=
    \begin{cases}
        3(z-x)^2 &
        \text{if } z-\sigma_\theta\leq x\leq z+\sigma_\theta,
        \\[1ex]
        \dfrac{3-\theta}{1-\theta} &
        \text{if either } x\leq z-\sigma_\theta \text{ or } x\geq z+\sigma_\theta,
    \end{cases}
\end{equation*}
and in particular
\begin{equation}\label{est:dF/dx<}
    \frac{\partial\Ft}{\partial x}(x,z)
   =\min\left\{3(z-x)^2,\frac{3-\theta}{1-\theta}\right\}
    \leq 3(z-x)^2
    \qquad
    \forall (x,z)\in\re^2,
\end{equation}
so that inequality (\ref{hp:FO<}) follows by integrating over $[x_1,x_2]$.

It remains to prove (\ref{hp:FV<}), which, by setting $z_1=x+a$ and $z_2=x+b$, reduces to
\begin{equation}
    \phihat_\theta(b)-\phihat_\theta(a)\leq
    2^{2-\theta}|b-a|^\theta
    \qquad
    \forall (a,b)\in\re^2.
    \label{hp:phi-holder}
\end{equation}

Due to the monotonicity of $\phihat_\theta$, in the proof of this inequality we can assume that $a<b$. In addition, we can assume also that $-\sigma_\theta\leq a<b\leq \sigma_\theta$, because we can always replace $a$ by $\max\{a,-\sigma_\theta\}$, and $b$ by $\min\{b,\sigma_\theta\}$, and in this way we reduce the right-hand side without altering the left-hand side. After this reduction we are left to proving that
\begin{equation*}
    (3-\theta)(b-a)-(1-\theta)(b^3-a^3)\leq 2^{2-\theta}(b-a)^\theta
    \qquad
    \forall a<b.
\end{equation*}

To this end, we start from the standard inequalities
\begin{equation*}
    e^x\geq 1+x
    \qquad
    \forall x\in\re
\end{equation*}
and 
\begin{equation*}
    \log x=\frac{1}{2}\log(x^2)\leq\frac{1}{2}(x^2-1)
    \qquad
    \forall x>0,
\end{equation*}
and for every $B>0$ we obtain that
\begin{multline*}
    \qquad
    B^\theta=B\cdot B^{\theta-1}=
    B\cdot e^{(\theta-1)\log B}\geq
    B\cdot\left(1-(1-\theta)\log B\right)
    \\[0.5ex]
    \geq
    B\left[1+(1-\theta)\frac{1-B^2}{2}\right]=
    \frac{B}{2}\left(3-\theta-(1-\theta)B^2\right).
    \qquad
\end{multline*}

From this inequality, applied with $B:=(b-a)/2$, we deduce that
\begin{equation*}
    2^{2-\theta}(b-a)^\theta=
    4\left(\frac{b-a}{2}\right)^\theta\geq
    (3-\theta)(b-a)-\frac{1}{4}(1-\theta)(b-a)^3,
\end{equation*}
and we conclude by observing that
\begin{equation*}
    (b-a)^3\leq 4(b^3-a^3)
    \qquad
    \forall a\leq b,
\end{equation*}
because the latter is equivalent to
\begin{equation*}
    4(b^3-a^3)-(b-a)^3=3(b-a)(b+a)^2\geq 0,
\end{equation*}
which is clearly true whenever $b\geq a$.

\begin{rmk}[Motivation Behind the Calibration]
\begin{em}

Let us comment on the choice of the calibration.

\begin{itemize}
    \item In Proposition~\ref{prop:calibr-1d}, the equalities (\ref{hp:FO=}) and (\ref{hp:FV=}) are assumed only for integer values of $j$, but the function $\Ft$ defined in (\ref{defn:calibration-1d}) actually satisfies both equalities for every real value of $j$. As a consequence, the function $\Ft$ defined in (\ref{defn:calibration-1d}) does not calibrate only the canonical staircase, but simultaneously calibrates all its oblique translations.

    In addition, the inequality (\ref{hp:FO<}) is actually an equality whenever $z-\sigma_\theta\leq x_1\leq x_2\leq z+\sigma_\theta$.

    \item  It can be proved (though it is a nontrivial exercise) that every function $\Ft$ satisfying the assumptions of Proposition~\ref{prop:calibr-1d} must coincide with the one defined in (\ref{defn:calibration-1d}) for every $(x,z)$ in the strip $x-1 \leq z \leq x+1$, namely the strip covered by the oblique translations of $S(x)$.

    Outside this strip, alternative constructions should be possible by modifying the choice of the truncated cubic in~(\ref{defn:cubic}). What is actually needed for the proof is that the truncated function coincides with the cubic $\varphi_\theta$ on the interval $[-1,1]$, satisfies a Hölder-type inequality as in~(\ref{hp:phi-holder}), and has a derivative that is bounded in such a way that~(\ref{est:dF/dx<}) holds.

    \item  As often happens in the calculus of variations, the calibration $\Ft$ can be interpreted as a \emph{value function} in the following sense. If the canonical staircase $S(x)$ is an entire local minimizer, then on every interval $(0,\ell)$ it minimizes the functional (\ref{defn:JF-1d}) subject to its own boundary conditions. This suggests defining $\Ft$ on the pairs $(\ell, S(\ell))$ as the corresponding minimum value, which is simply the value of (\ref{defn:JF-1d}) with $\Omega = (0,\ell)$ and $u = S$. If we interpolate these values in the simplest linear way, we recover (\ref{defn:calibration-1d}), at least within the strip covered by all oblique translations of $S$.
\end{itemize}
    
\end{em}    
\end{rmk}

\paragraph{\textit{\textmd{Step 4 -- Uniqueness}}}

In order to prove that the oblique translations of $S(x)$ are the unique entire local minimizers, we need to repeat, for a generic exponent $\theta\in[0,1)$, the same procedure used in~\cite[Section~6.2]{FastPM-CdV} for the case $\theta=1/2$, and in~\cite[Proposition~4.4]{fastpm-discreto} for the case $\theta=0$. Since the full argument is detailed in those references, here we limit ourselves to sketching the main points.
\begin{itemize}

    \item \emph{Discreteness of jump points.} The set of jump points of any entire local minimizer is discrete. Indeed, if this were not the case, one could construct a better competitor by concentrating all sufficiently small jump heights into a single jump point. A key role in this argument is played by the subadditivity of the function $\sigma\mapsto\sigma^\theta$. 
    
    \item \emph{Existence of jump points.} Any interval of sufficiently large length contains at least one jump point, since any entire local minimizer must follow the profile of the forcing term \( Mx \).

    \item \emph{Symmetry of jumps.} At each jump point, any entire local minimizer transitions between two values whose mean equals the forcing term. This necessary condition for minimality corresponds to the Euler--Lagrange equation associated with \emph{horizontal perturbations}, where the location of a jump point is varied.

    \item \emph{Equidistance of jump points.} The distance between any two consecutive jump points is constant. This condition arises from considering \emph{vertical perturbations} of the form \( u + tv \).

    \item \emph{Optimization of the parameters.} After the previous steps, one knows that any entire local minimizer has a staircase structure that follows the forcing term. What remains is to optimize the length (and hence the height) of the steps. This leads to a calculation similar to that in Remark~\ref{rmk:heuristic}.
\end{itemize}


\setcounter{equation}{0}
\section{The slicing method}\label{sec:staicases-dd}

In this section, we show that every entire local minimizer in some dimension can be extended to any higher dimension by simply ignoring the extra variables. We use this result in two instances. First, it implies that statement~(1) of Theorem~\ref{thm:main-dd} follows directly from Theorem~\ref{thm:main-1d}. Second, it reduces the proof of statement~(2) of Theorem~\ref{thm:main-dd} to the case \( d = 2 \).

The idea is the following. Let $d_1$ and $d_2$ be two positive integers. Let us write the elements of $\re^{d_1+d_2}$ as pairs $(x,y)$ with $x\in\re^{d_1}$ and $y\in\re^{d_2}$. Every vector $\xi\in\re^{d_1}$ can be extended to a vector $\widehat{\xi}\in\re^{d_1+d_2}$ by setting $\widehat{\xi}:=(\xi,0)$. Every function $u:\re^{d_1}\to\re$ can be extended to a function $\widehat{u}:\re^{d_1+d_2}\to\re$ by setting $\widehat{u}(x,y):=u(x)$.

\begin{prop}[Extension of entire local minimizers to higher dimension]\label{prop:slicing}

Let $d_1$ and $d_2$ be two positive integers. Let us assume that $u\in\PJ\loc(\re^{d_1})$ is an entire local minimizer for the functional (\ref{defn:JF-dd}) for some choice of the parameters $\theta$, $\alpha$, $\beta$ and $\xi\in\re^{d_1}$.

Then $\widehat{u}\in\PJ\loc(\re^{d_1+d_2})$ is an entire local minimizer for the functional (\ref{defn:JF-dd}) with parameters $\theta$, $\alpha$, $\beta$ and $\widehat{\xi}\in\re^{d_1+d_2}$.
    
\end{prop}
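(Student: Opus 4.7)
The plan is to establish Proposition~\ref{prop:slicing} via a slicing/Fubini argument that reduces minimality of $\widehat{u}$ in $\re^{d_1+d_2}$ to the assumed minimality of $u$ on each horizontal hyperplane $\re^{d_1}\times\{y\}$. First I would perform a standard localization: given any competitor $v\in\PJ\loc(\re^{d_1+d_2})$ that coincides with $\widehat{u}$ outside some compact set $K\subseteq\widehat{\Omega}$, choose bounded open sets $\Omega_{1}\subseteq\re^{d_1}$ and $\Omega_{2}\subseteq\re^{d_2}$ with $K\subseteq\Omega_{1}\times\Omega_{2}$. Since $v=\widehat{u}$ outside $K$, the quantity $\JF(\widehat{\Omega},v)-\JF(\widehat{\Omega},\widehat{u})$ depends only on the behavior in any open neighborhood of $K$, and the same holds for the analogous quantity on $\Omega_{1}\times\Omega_{2}$ after extending $v$ by $\widehat{u}$ outside $\widehat{\Omega}$. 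Hence the desired inequality reduces to the product case $\widehat{\Omega}=\Omega_{1}\times\Omega_{2}$.

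For the core argument I would slice in the second factor. For a.e.\ $y\in\Omega_{2}$, the classical $GSBV$ slicing theory from~\cite{AFP} ensures that the horizontal slice $v_{y}(x):=v(x,y)$ belongs to $GSBV(\Omega_{1})$, and its approximate gradient $\nabla v_{y}(x)=\nabla_{x}v(x,y)$ vanishes almost everywhere because $\nabla v=0$ a.e.\ in $\re^{d_1+d_2}$. Hence $v_{y}\in\PJ(\Omega_{1})$, and by construction $v_{y}$ coincides with $u$ outside the compact projection of $K$ onto $\re^{d_1}$. Using the assumed minimality of $u$ and integrating over $\Omega_{2}$ yields
\begin{equation*}
    |\Omega_{2}|\cdot\JF_{\theta,\alpha,\beta,\xi}(\Omega_{1},u)\leq\int_{\Omega_{2}}\JF_{\theta,\alpha,\beta,\xi}(\Omega_{1},v_{y})\,dy.
\end{equation*}

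To conclude I would compare both sides with the $(d_1+d_2)$-dimensional functional. For $\widehat{u}$ the comparison is an equality: indeed $\widehat{u}$ is independent of $y$, $\langle\widehat{\xi},(x,y)\rangle=\langle\xi,x\rangle$, and $S_{\widehat{u}}=S_{u}\times\re^{d_2}$ with jump height equal to $|u^{+}-u^{-}|(x)$, so Fubini applied to the fidelity term and to the product structure of the $(d_1+d_2-1)$-dimensional Hausdorff measure on $S_{u}\times\Omega_{2}$ (which factors as the product of $\mathcal{H}^{d_1-1}$ on $S_{u}$ and $d_2$-dimensional Lebesgue measure on $\Omega_{2}$) gives
\begin{equation*}
|\Omega_{2}|\cdot\JF_{\theta,\alpha,\beta,\xi}(\Omega_{1},u)=\JF_{\theta,\alpha,\beta,\widehat{\xi}}(\Omega_{1}\times\Omega_{2},\widehat{u}).
\end{equation*}
For the competitor $v$, Fubini handles the fidelity term directly, while the jump term is controlled by the slicing inequality
\begin{equation*}
    \int_{\Omega_{2}}\int_{S_{v_{y}}\cap\Omega_{1}}|v_{y}^{+}-v_{y}^{-}|^{\theta}\,d\mathcal{H}^{d_1-1}\,dy\leq\int_{S_{v}\cap(\Omega_{1}\times\Omega_{2})}|v^{+}-v^{-}|^{\theta}\,d\mathcal{H}^{d_1+d_2-1}.
\end{equation*}

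The hard part will be this last estimate, which I would derive from the coarea formula applied to the projection $(x,y)\mapsto y$ restricted to the rectifiable set $S_{v}$: the $d_2$-dimensional Jacobian of this projection is at most $1$, the fiber over a.e.\ $y$ coincides up to $\mathcal{H}^{d_1-1}$-negligible sets with $S_{v_{y}}$, and the one-sided traces of $v$ on $S_v$ agree with those of $v_y$ for a.e.\ $y$ by the $GSBV$ slicing theory; these three ingredients together turn the coarea identity into the displayed inequality. Chaining the three displays finally produces $\JF_{\theta,\alpha,\beta,\widehat{\xi}}(\Omega_{1}\times\Omega_{2},\widehat{u})\leq\JF_{\theta,\alpha,\beta,\widehat{\xi}}(\Omega_{1}\times\Omega_{2},v)$, which by the initial reduction proves the proposition.
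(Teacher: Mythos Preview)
Your proof is correct and follows essentially the same slicing/Fubini strategy as the paper: slice the competitor in the $y$-variable, apply the $d_1$-dimensional minimality of $u$ on each slice, integrate, and compare with the $(d_1{+}d_2)$-dimensional functional via the product structure for $\widehat{u}$ and the coarea inequality for $v$. The only cosmetic difference is that you first reduce to a product domain $\Omega_1\times\Omega_2$, whereas the paper works directly with an arbitrary open set $\Omega$ and its sections $\Omega_y$; your reduction is harmless but not needed, and your justification of the slicing inequality via the coarea formula for the projection $(x,y)\mapsto y$ is exactly what the paper's citation of \cite[Theorem~3.2.22]{Federer-book} amounts to.
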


\begin{proof}

Let us consider any open set $\Omega\subseteq\re^{d_1+d_2}$, and any function $v\in\PJ(\Omega)$ that coincides with $\widehat{u}$ outside a compact subset of $\Omega$.

For every $y\in\re^{d_2}$ we can consider the corresponding $d_1$-dimensional section $\Omega_y\subseteq\re^{d_1}$ of $\Omega$, defined as
\begin{equation*}
    \Omega_y:=\left\{x\in\re^{d_1}:(x,y)\in\Omega\right\},
\end{equation*}
and the $d_1$-dimensional sections of $\widehat{u}$ and $v$ defined as
\begin{equation*}
    \widehat{u}_y(x):=\widehat{u}(x,y)=u(x)
    \qquad
    \forall x\in\re^{d_1},
\end{equation*}
and
\begin{equation*}
    v_y(x):=v(x,y)
    \qquad
    \forall x\in\Omega_y.
\end{equation*}

Since $v_y$ coincides with $u$ outside a compact subset of $\Omega_y$, and $u$ is an entire local minimizer in $\re^{d_1}$, we deduce that (here we write $\JF$ without all the parameters, since they are fixed throughout the proof)
\begin{equation*}
    \JF(\Omega_y,v_y)\geq\JF(\Omega_y,u)
    \qquad
    \forall y\in\re^{d_2},
\end{equation*}
and hence
\begin{equation}
    \int_{\re^{d_2}}\JF(\Omega_y,v_y)\,dy\geq
    \int_{\re^{d_2}}\JF(\Omega_y,u)\,dy.
    \label{ineq:slicing}
\end{equation}

The key point is that, on the one hand, we have
\[
\int_{\mathbb{R}^{d_2}} \JF(\Omega_y, u) \, dy = \JF(\Omega, \widehat{u}),
\]
since \( S_{\widehat{u}} = S_u \times \mathbb{R}^{d_2} \) and \( \widehat{u}^{\pm}(x, y) = u^{\pm}(x) \) for every \( (x, y) \in S_u \times \mathbb{R}^{d_2} \). On the other hand, from \cite[Theorem~3.2.22]{Federer-book}, we deduce that
\[
\int_{\mathbb{R}^{d_2}} \JF(\Omega_y, v_y) \, dy \leq \JF(\Omega, v).
\]

Plugging these two relations into (\ref{ineq:slicing}) we conclude that
\begin{equation*}
    \JF(\Omega,\widehat{u})\leq\JF(\Omega,v).
\end{equation*}

Since $\Omega$ and $v$ are arbitrary, this is enough to prove that $\widehat{u}$ is an entire local minimizer in $\re^{d_1+d_2}$.    
\end{proof}


\setcounter{equation}{0}
\section{Exotic minimizers in the plane}\label{sec:exotic}

In this section we prove statement~(2) of Theorem~\ref{thm:main-dd} by showing that for $\theta=0$ some exotic ``double staircases'' are entire local minimizers in $\re^2$. The construction can then be extended to any dimension $d\geq 3$ by a straightforward application of the slicing technique of Proposition~\ref{prop:slicing}.

\paragraph{\textit{\textmd{Step 1 -- Reduction of the parameters}}}

To begin with, up to a rotation we can always assume that $\xi$ is of the form $(M,0)$ for some positive real number $M$. Then, as in Step~1 of the proof of Theorem~\ref{thm:main-1d}, with a homothety argument we reduce ourselves to the case where
\begin{equation}
    \alpha=\alpha_\theta=\frac{2^{2-\theta}}{1-\theta},
    \qquad\qquad
    \beta=3,
    \qquad\qquad
    \xi=(1,0),
\label{defn:abxi-simple}
\end{equation}
for which in Theorem~\ref{thm:main-dd} we obtain $H=V=1$. Therefore, in this case we already know that the canonical staircase in the direction $(1,0)$, as well as all its oblique translations, are entire local minimizers. Our goal is showing that there are more. 

\paragraph{\textit{\textmd{Step 2 -- Definition of the bi-staircase}}}

Let us consider the function $\gt:[0,1]\to\re$ defined as
\begin{equation}
    \gt(x):=\frac{2}{1-\theta}+3x-3x^2
    \qquad
    \forall x\in[0,1].
    \label{defn:gt}
\end{equation}

We observe that
\begin{equation*}
    \frac{2}{1-\theta}\leq
    \gt(x)\leq
    \frac{2}{1-\theta}+\frac{3}{4}<
    \alpha_\theta
    \qquad
    \forall x\in[0,1],
\end{equation*}
because the last inequality is equivalent to $2^{4-\theta}+3\theta-11>0$, which is true for every $\theta\in[0,1)$ because the left-hand side is a convex function of $\theta$ that vanishes for $\theta=1$ with negative derivative.

As a consequence, we can consider the function $\ft:\re\to\re$ defined as
\begin{equation}
    \ft(x):=\int_0^{|x|}\frac{\gt(x)}{\sqrt{\alpha^2 _\theta-\gt(x)^2}}\,dx
    \qquad
    \forall x\in[-1,1],
    \label{defn:ft}
\end{equation}
and then extended to the whole real line by 2-periodicity.

\begin{defn}[Bi-staircases]\label{defn:bi-staircase}
\begin{em}

For every $\theta\in[0,1)$, let $\gt$ and $\ft$ be the functions defined in (\ref{defn:gt}) and (\ref{defn:ft}), and let $S(x)$ be the staircase defined in (\ref{defn:S(x)}). The \emph{canonical bi-staircase} with parameter $\theta$ is the function $\St\in\PJ\loc(\re^2)$ defined by
\begin{equation*}
    \St(x,y):=
    \begin{cases}
    S(x) &
    \text{if }y>\ft(x),
    \\
    S(x-1)+1 \quad &
    \text{if }y<\ft(x).
    \end{cases}
\end{equation*}
 
The \emph{oblique translations} of the canonical bi-staircase are all functions $v$ of the form $v(x,y):=\St(x-\tau_0,y)+\tau_0$ for some real number $\tau_0\in[-1,1]$. The translations of $v(x,y)$ in the $y$-direction are the functions of the form $(x,y)\mapsto v(x,y+t)$ for some $t\in\re$.

\end{em}    
\end{defn}

We observe that the range of the canonical bi-staircase is the set of all integers. Specifically, for every $k\in\Z$ it turns out that $\St(x,y)=2k$ if $x\in(2k-1,2k+1)$ and $y>\ft(x)$, while $\St(x,y)=2k+1$ if $x\in(2k,2k+2)$ and $y<\ft(x)$. The jump set of $\St$ is the union of the graph of $\ft$, and of the vertical half-lines
\begin{equation*}
    \{2k\}\times(-\infty,0]
    \qquad\quad\text{and}\quad\qquad
    \{2k+1\}\times[\ft(1),+\infty)
\end{equation*}
with $k\in\Z$. Figure~\ref{fig:bi-staircase} provides a description of the level sets and the jump set of the canonical bi-staircase.

\begin{figure}[ht]
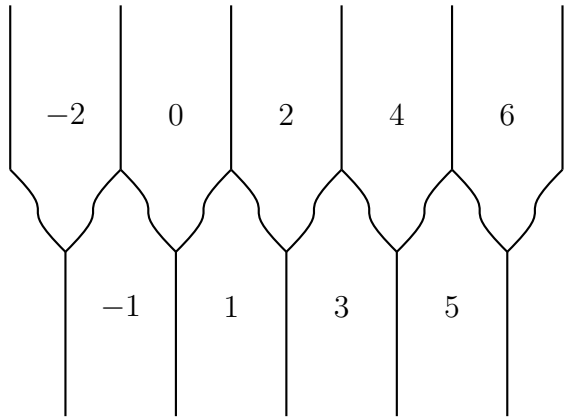


\centering
    
\psset{unit=4ex}
\pspicture(-2,-3.5)(10,5)

\multido{\n=0+2}{5}{
\psline(\n,0)(\n,-3)
\psline(!\n \space -1 add 1.5)(!\n \space -1 add 4.5)
\psbezier(\n,0)(!\n \space 1 add 1)(\n,0.5)(!\n \space 1 add 1.5)
\psbezier(\n,0)(!\n \space -1 add 1)(\n, 0.5)(!\n \space -1 add 1.5)
}
\psline(9,1.5)(9,4.5)

\multido{\n=-2+2}{5}{\rput(!\n \space 2 add 2.5){$\n$}}
\multido{\n=-1+2}{4}{\rput(!\n \space 2 add -1){$\n$}}

\endpspicture

\caption{Some level sets of the canonical bi-staircase. The separation between the regions with odd and even values is the graph of the function $\ft$.}
\label{fig:bi-staircase}
\end{figure}

\begin{rmk}[Heuristic interpretation]\label{rmk:bi-staircase}
\begin{em}

At this point one might ask what is the reason behind the rather mysterious definition (\ref{defn:ft}). In order to answer, let us consider, for example, the boundary between the region where $u=0$ and the region where $u=1$. Let us assume that in the rectangle $\Omega:=[0,1]\times[-R,R]$, with $R$ large enough, the frontier between the two regions is described by some curve $y=\ft(x)$. Then in $\Omega$ the functional $\JF$ with parameters given by (\ref{defn:abxi-simple}) is equal to
\begin{equation*}
    \alpha_\theta \int_0^1\sqrt{1+\ft'(x)^2}\,dx+ 
    3\int_0^1\left[ (\ft(x)+R) (1-x)^2 + (R-\ft(x))x^2 \right]\,dx.
\end{equation*}

If $u$ is a minimizer for $\JF$, then $\ft$ has to minimize this functional, and therefore if has to satisfy the Euler-Lagrange equation, that in this case reads as
\begin{equation*}
    \alpha_\theta\left(\frac{\ft'(x)}{\sqrt{1+\ft'(x)^2}}\right)'=
    3\left[(1-x)^2-x^2\right]=
    3-6x,
\end{equation*}
from which we obtain that
\begin{equation}
    \alpha_\theta\frac{\ft'(x)}{\sqrt{1+\ft'(x)^2}}=K+3x-3x^2
    \qquad
    \forall x\in[0,1]
    \label{eqn:ft'}
\end{equation}
for some real number $K$. In order to compute the value of $K$, we impose that the weighted sum of the three tangent vectors in the triple junction, corresponding to $x=0$, vanishes. In this sum the weight of the vertical vector, corresponding to the separation between 1 and $-1$, is $2^\theta$ times the weight of the other two vectors, corresponding to jump heights equal to~1. When we impose this condition we obtain that $K= \alpha_\theta\cdot 2^{\theta-1}$, so that the right-hand side of (\ref{eqn:ft'}) is exactly the function $\gt$ defined in (\ref{defn:gt}). At this point we compute $\ft'(x)$ from (\ref{eqn:ft'}) and we end up with (\ref{defn:ft}).

Incidentally, we observe here that 
\begin{equation}\label{eq:g=F1-F0}
    \gt(x)=\Ft(x,1)-\Ft(x,0)
    \qquad
    \forall x\in[0,1],
\end{equation}
where $\Ft$ is the function that we defined in (\ref{defn:calibration-1d}) in order to calibrate one dimensional staircases. This ``coincidence'' will be essential in the sequel.

\end{em}
\end{rmk}

\paragraph{\textit{\textmd{Step 3 -- The calibration method for the bi-staircase}}}

In the case of bi-staircases in $\re^2$ the calibration method reduces to the following.

\begin{prop}[Calibration for the bi-staircase]\label{prop:calibr-dd}
    For every real number $\theta\in[0,1)$, let $\alpha_\theta$ be defined as in (\ref{defn:abxi-simple}), and let us consider the function $\Ft$ defined in (\ref{defn:calibration-1d}). 
    
    Let us assume that there exists a continuous function $\At:[0,1]\times \R\to\re$ such that
    \begin{enumerate}
    \renewcommand{\labelenumi}{(\roman{enumi})}

        \item it satisfies the equalities
        \begin{equation}\label{eq:ext_cont}
        \At(0,z)=\At(0,-z)
        \qquad\mbox{and}\qquad 
        \At(1,z+2)=\At(1,-z)
        \qquad 
        \forall z\in\R;
        \end{equation}

        \item it satisfies the equality
        \begin{equation}
            \left[\At(x,1)-\At(x,0)\right]^2+\left[\Ft(x,1)-\Ft(x,0)\right]^2=
            \alpha_\theta ^2
            \qquad
            \forall x\in [0,1],
            \label{hp:ABV=}
        \end{equation}
        with
        \begin{equation}
            \At(x,1)>\At(x,0)
           \qquad
            \forall x\in [0,1];
            \label{hp:ABV-sign}
        \end{equation}
    
        \item it satisfies the inequality
        \begin{equation}
            \left[\At(x,z_2)-\At(x,z_1)\right]^2+\left[\Ft(x,z_2)-\Ft(x,z_1)\right]^2\leq
            \alpha_\theta ^2(z_2-z_1)^{2\theta}
            \label{hp:ABV<}
        \end{equation}
        for every $x\in[0,1]$ and every $z_1\leq z_2$.
            
    \end{enumerate} 
    
    Then the bi-staircase $\St(x)$ introduced in Definition~\ref{defn:bi-staircase}, along with all its oblique translations and their translations in the $y$-direction, is an entire local minimizer of the functional~(\ref{defn:JF-dd}), with parameters given by~(\ref{defn:abxi-simple})..

\end{prop}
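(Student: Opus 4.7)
The plan is to adapt the calibration of Proposition~\ref{prop:calibr-1d} to two dimensions, following the strategy sketched in the introduction: take a divergence-free field $\Phi=\mathrm{curl}(A,B,0)$ in $\R^3$ with $B(x,y,z):=F_\theta(x,z)$ and second component $A_\theta(x,z)$, but, in the spirit of Remark~\ref{rmk:ABDM-1d}, package everything as a combinatorial object involving only values of $A_\theta$, never its derivatives. First I would extend $A_\theta$ continuously from $[0,1]\times\R$ to the whole of $\R^2$ by iterated reflections $A_\theta(-x,z):=A_\theta(x,-z)$ and $A_\theta(2-x,2-z):=A_\theta(x,z)$, together with the diagonal two-periodicity $A_\theta(x+2,z+2)=A_\theta(x,z)$; conditions~(\ref{eq:ext_cont}) are precisely what is needed for continuity across $x\in\Z$, and the resulting extension mirrors the symmetries of $\widehat{S}_\theta$.

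For a piecewise constant $v$ that equals $c_R$ on each connected component $R$ of $\Omega\setminus S_v$, define
\begin{equation*}
    \mathbb{G}(\Omega,v):=\sum_{R}\iint_{R}\partial_x F_\theta(x,c_R)\,dx\,dy
    -\int_{S_v\cap\Omega}\bigl[\Delta F\cdot T_2+\Delta A\cdot T_1\bigr]\,d\mathcal{H}^1,
\end{equation*}
where $\Delta F:=F_\theta(x,v^+)-F_\theta(x,v^-)$, $\Delta A:=A_\theta(x,v^+)-A_\theta(x,v^-)$, and $T=(T_1,T_2)$ is the unit tangent to $S_v$ oriented so that the side where $v=v^+$ lies on the left. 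I claim that $\mathbb{G}(\Omega,v)$ depends only on $v|_{\partial\Omega}$. To see this I would apply Green's theorem to each bulk term, exploiting that $\partial_y A_\theta\equiv 0$, getting $\iint_R\partial_x F_\theta(x,c_R)\,dx\,dy=\oint_{\partial R}[A_\theta(x,c_R)\,dx+F_\theta(x,c_R)\,dy]$; summing over $R$, each internal jump is traversed once from each side in opposite directions, producing a contribution $\int_{S_v\cap\Omega}[\Delta A\cdot T_1+\Delta F\cdot T_2]\,d\mathcal{H}^1$ that exactly cancels the line term in the definition of $\mathbb{G}$, leaving only the boundary contribution $\oint_{\partial\Omega}[A_\theta(x,v)\,dx+F_\theta(x,v)\,dy]$. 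In particular $\mathbb{G}(\Omega,v)=\mathbb{G}(\Omega,\widehat{S}_\theta)$ for every $v$ coinciding with $\widehat{S}_\theta$ outside a compact subset of $\Omega$.

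The bound $\mathbb{G}(\Omega,v)\leq\JF(\Omega,v)$ is then obtained in two parts: the bulk term is at most the fidelity because $\partial_x F_\theta(x,c)\leq 3(c-x)^2$ by~(\ref{est:dF/dx<}); each line term is estimated by Cauchy-Schwarz followed by~(\ref{hp:ABV<}),
\begin{equation*}
    \left|\int_\gamma\bigl[\Delta F\cdot T_2+\Delta A\cdot T_1\bigr]\,d\mathcal{H}^1\right|
    \leq\int_\gamma\sqrt{\Delta F^2+\Delta A^2}\,d\mathcal{H}^1
    \leq\alpha_\theta\int_\gamma(v^+-v^-)^\theta\,d\mathcal{H}^1,
\end{equation*}
whose total is exactly the jump part of $\JF(\Omega,v)$.

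The main obstacle is to verify that equality holds everywhere when $v=\widehat{S}_\theta$. In the bulk, equality in~(\ref{est:dF/dx<}) holds because $|c-x|\leq 1\leq\sigma_\theta$ on every level set of $\widehat{S}_\theta$. In the line term, three types of jumps must be handled separately: at the vertical walls $\{2k\}\times(-\infty,0]$ and $\{2k+1\}\times[f_\theta(1),+\infty)$, the extension of $A_\theta$ combined with~(\ref{eq:ext_cont}) gives $\Delta A=0$, while~(\ref{hp:FV=}) provides $\Delta F=4/(1-\theta)=\alpha_\theta\cdot 2^\theta$, matching exactly the jump cost $\alpha_\theta\cdot 2^\theta$; at the tilted triple junction $y=f_\theta(x)$ of unit jump height, equality in~(\ref{hp:ABV<}) is provided by~(\ref{hp:ABV=}), while equality in Cauchy-Schwarz follows from~(\ref{hp:ABV=}) and the sign condition~(\ref{hp:ABV-sign}), which together with the defining relation $f_\theta'/\sqrt{1+(f_\theta')^2}=g_\theta(x)/\alpha_\theta$ of Remark~\ref{rmk:bi-staircase} force $\Delta A=\alpha_\theta/\sqrt{1+(f_\theta')^2}$ and $\Delta F=\alpha_\theta f_\theta'/\sqrt{1+(f_\theta')^2}$, so that $(T_1,T_2)=-(1/\alpha_\theta)(\Delta A,\Delta F)$ as required. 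Once these three equality cases are established we obtain $\JF(\widehat{S}_\theta)=\mathbb{G}(\Omega,\widehat{S}_\theta)=\mathbb{G}(\Omega,v)\leq\JF(\Omega,v)$, proving local minimality; oblique translations and $y$-translations are handled by a straightforward modification of the calibration, and the reduction from $v$ with locally finite jump set to general $v\in\PJ$ is by a density argument as in the one-dimensional case.
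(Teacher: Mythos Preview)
Your proposal is correct and follows essentially the same approach as the paper. The only presentational difference is that you define the osculating functional $\mathbb{G}$ directly via its bulk-plus-jump decomposition and then derive boundary dependence by Green's theorem, whereas the paper defines $\mathbb{G}$ as a sum of line integrals along $\partial\Omega$ and then derives the bulk-plus-jump representation; these are two sides of the same telescoping identity. Your observation that $\Delta A=0$ on the vertical half-lines (a direct consequence of~(\ref{eq:ext_cont}) and the extension) is slightly more explicit than the paper's argument, which simply notes that $\gamma_j'=(0,1)$ there so the $A_\theta$-component drops out of the inner product. A few of your sign and orientation conventions at the graph of $f_\theta$ are stated loosely and would need to be made consistent in a final write-up, but the key mechanism---equality in~(\ref{hp:ABV<}) from~(\ref{hp:ABV=}), and equality in Cauchy--Schwarz from~(\ref{hp:ABV-sign}) together with the defining ODE for $f_\theta$---is exactly right. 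For the oblique translations and $y$-translations, note that no modification of the calibration is actually needed: the paper simply invokes the invariance of the functional under these transformations, which immediately transfers local minimality from $\widehat{S}_\theta$ to its translates.
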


\begin{proof}

As in the one-dimensional case, due to the invariance of the problem under ``oblique translations'' and translations in the $y$-direction, it is sufficient to prove that the canonical bi-staircase $\St$ is an entire local minimizer. The proof strategy consists in first extending $\At$ to the entire plane, and then using $\At$ and $\Ft$ to construct an auxiliary functional $\mathbb{G}(\Omega,v)$ for which a chain of equalities and inequalities of the form (\ref{osculating}) can be established.

\paragraph{\textit{\textmd{Extension}}}

Let us first extend $\At$ to the set $[-1,1]\times \R$ in an even way, by setting $\At(x,z) := \At(-x,-z)$ for every $x \in [-1,0]$. We then further extend $\At$ to the whole plane by $(2,2)$-periodicity; that is, we require that $\At(x+2,z+2) = \At(x,z)$ for every $(x,z) \in \R^2$. We remark that condition (\ref{eq:ext_cont}) ensures the consistency of this extension and guarantees that the resulting function is continuous on $\R^2$.

\paragraph{\textit{\textmd{Notation}}}

Let us introduce some notation. An open subset of the plane is called \emph{regular} if it is bounded and its boundary is piecewise $C^1$. For every regular open set $\Omega$, we call $PC(\Omega)$ the set of \emph{piecewise constant} functions in $\Omega$, namely the set of all functions $v\in\PJ(\Omega)$ whose image is finite, and whose level sets are (up to negligible sets) regular open sets. To every $v\in PC(\Omega)$ we associate three families of objects.
\begin{itemize}
    \item \emph{Level sets.} Let $\Imm(v)$ denote the image of $v$, which is a finite subset of $\re$. For every $z\in \Imm(v)$, we consider the differential form in the plane
    \begin{equation}
        \omega_z:=\At(x,z)\,dx+\Ft(x,z)\,dy,
        \label{defn:omega-z}
    \end{equation} 
    and the level set $\Omega_z:=v^{-1}(z)$. We observe that these level sets form a partition of $\Omega$, up to negligible sets.

    \item \emph{Parametrization of the boundary.} A finite collection of embedded curves $\{\gamma_j\}_{j\in J_1}$ of class $C^1$, each of which parametrizes a portion of the boundary $\partial\Omega$, and with the property that any two distinct curves intersect at most at their endpoints. We assume that, for every $j\in J_1$, the curve $\gamma_j$ is oriented in such a way that $\Omega$ lies ``on the left'' of $\gamma_j$, and the function $v$ takes a constant value $z_{\ell,j} \in \operatorname{Im}(v)$ on the left side of $\gamma_j$.

    \item  \emph{Parametrization of the jump set.} A finite collection of embedded curves $\{\gamma_j\}_{j\in J_2}$ of class $C^1$, each of which parametrizes a portion of the boundary of the level sets of $v$ (not already included in the boundary of $\Omega$), and hence describes the jump set of $v$. Again any two distinct curves can intersect at most at their endpoints. For each $j \in J_2$, we assume that $\gamma_j$ separates the level set corresponding to some value $z_{\ell,j} \in \operatorname{Im}(v)$ on its left from the level set corresponding to some value $z_{r,j} \in \operatorname{Im}(v)$ on its right.
    
\end{itemize}

\paragraph{\textit{\textmd{The osculating functional and its properties}}}

For every regular open set $\Omega$, and every $v\in PC(\Omega)$, we define the functional
\begin{equation}
    \mathbb{G}(\Omega,v):=\sum_{j\in J_1}\int_{\gamma_j}\omega_{z_{\ell,j}}.
    \label{defn:G-dd}
\end{equation}

The key properties of this functional are the following.
\begin{enumerate}
\renewcommand{\labelenumi}{(\arabic{enumi})}

    \item (Dependence only on the values at the boundary). If $v$ and $w$ are elements of $PC(\Omega)$ that coincide in a neighborhood of $\partial\Omega$, then
    \begin{equation*}
        \mathbb{G}(\Omega,v)=\mathbb{G}(\Omega,w).
    \end{equation*}

    \item (Representation in terms of levels sets and jump set). We can rewrite (\ref{defn:G-dd}) as
    \begin{equation}
        \mathbb{G}(\Omega,v)=
        \sum_{z\in \Imm(v)}\int_{\partial\Omega_z}\omega_z+
        \sum_{j\in J_2}\int_{\gamma_j}\left(\omega_{z_{r,j}}-\omega_{z_{\ell,j}}\right),
        \label{eqn:G1}
    \end{equation}
    where, in the integrals of the first sum, the boundary of $\Omega_z$ is oriented as usual in such a way that $\Omega_z$ lies on the left of the boundary. In the sequel we refer to the terms of first sum as the contribution of level sets, and to the terms of the second sum as the contribution of the jumps.

    \item  (Inequality and equality for level sets). For every $z\in \Imm(v)$ it turns out that
    \begin{equation}
        \int_{\partial\Omega_z}\omega_z\leq
        3\int_{\Omega_z}(z-x)^2\,dx\,dy,
        \label{th:calibr-fidelity}
    \end{equation}
    with equality if
    \begin{equation}
        \Omega_z\subseteq\{(x,z)\in\R^2:|z-x|\leq \sigma_\theta\}.
        \label{Hp:Omega-z}
    \end{equation}

    \item (Inequality and equality for jump terms). For every $j\in J_2$ it turns out that
    \begin{equation}
        \int_{\gamma_j}\left(\omega_{z_{r,j}}-\omega_{z_{\ell,j}}\right)\leq
        \alpha_\theta|z_{r,j}-z_{\ell,j}|^\theta\cdot\operatorname{Length}(\gamma_j),
        \label{th:calib-boundary}
    \end{equation}
    with equality if $\gamma_j$ is contained in the jump set of $\St$ and $(z_{\ell,j},z_{r,j})$ are the two values of $\St$ on the two sides of $\gamma_j$.

\end{enumerate}

\paragraph{\textit{\textmd{Proof of properties (1), (2), (3) of the osculating functional}}}

Property~(1) is an immediate consequence of definition (\ref{defn:G-dd}), which depends only on the values of $v$ in a neighborhood of $\partial\Omega$.

In order to show property~(2), by adding and subtracting equal terms, we write (\ref{defn:G-dd}) as
\begin{equation}
    \mathbb{G}(\Omega,v)=\sum_{j\in J_1}\int_{\gamma_j}\omega_{z_{\ell,j}}+
    \sum_{j\in J_2}\int_{\gamma_j}\left(\omega_{z_{\ell,j}}-\omega_{z_{r,j}}\right)+
    \sum_{j\in J_2}\int_{\gamma_j}\left(\omega_{z_{r,j}}-\omega_{z_{\ell,j}}\right).
    \label{eqn:G2}
\end{equation}

The goal is showing that the first two sums in (\ref{eqn:G2}) are equal to the first sum in (\ref{eqn:G1}). With this goal in mind, for every $z\in \Imm(v)$ we group together all the terms in the first two sums of (\ref{eqn:G2}) that represent the integral of $\omega_z$ along pieces of $\partial\Omega_z$. We observe that all these terms appear with the sign corresponding to the correct orientation of $\partial\Omega_z$ since, for every $j\in J_2$, the curve $\gamma_j$ has the correct orientation when considered as a part of the boundary of $\Omega_{z_{\ell,j}}$, and the reverse orientation (which compensate the negative sign) when considered as a part of the boundary of $\Omega_{z_{r,j}}$.

In order to show property~(3), we observe that $\At$ does not depend on $y$, and hence from Green's theorem (ore more generally from Stokes' theorem) we deduce that
\begin{equation*}
        \int_{\partial\Omega_z}\omega_z=
        \int_{\Omega_z}\frac{\partial\Ft}{\partial x}(x,z)\,dx\,dy,
\end{equation*}
    so that the inequality (\ref{th:calibr-fidelity}) follows from estimate (\ref{est:dF/dx<}). 
    
On the other hand, when $\Omega_z$ satisfies (\ref{Hp:Omega-z}), we know that there is equality in (\ref{est:dF/dx<}), and this yields equality in (\ref{th:calibr-fidelity}).

\paragraph{\textit{\textmd{Proof of property (4) of the osculating functional}}}

To begin with, we write the components of $\gamma_j$ as
\begin{equation*}
    \gamma_j(t)=(x_j(t),y_j(t))
    \qquad
    t\in[a_j,b_j],
\end{equation*}
and we obtain that
\begin{equation}
    \int_{\gamma_j}\left(\omega_{z_{r,j}}-\omega_{z_{\ell,j}}\right)=
    \int_{a_j}^{b_j}\langle E_j(t),\gamma_j'(t)\rangle\,dt,
    \label{eqn:int-gamma}
\end{equation}
where
\begin{equation*}
    E_j(t):=\left(
    \At(x_j(t),z_{r,j})-\At(x_j(t),z_{\ell,j}),
    \Ft(x_j(t),z_{r,j})-\Ft(x_j(t),z_{\ell,j})
    \strut\right).
\end{equation*}

Now from  assumption (\ref{hp:ABV<}) we deduce that
\begin{equation*}
    \|E_j(t)\|\leq\alpha_\theta|z_{r,j}-z_{\ell,j}|^\theta
    \qquad
    \forall t\in[a_j,b_j],
\end{equation*}
and therefore from Cauchy--Schwarz inequality we conclude that
    \begin{equation*}
        \int_{\gamma_j}\left(\omega_{z_{r,j}}-\omega_{z_{\ell,j}}\right)\leq
        \int_{a_j}^{b_j}\|E_j(t)\|\cdot\|\gamma_j'(t)\|\,dt\leq
        \alpha_\theta|z_{r,j}-z_{\ell,j}|^\theta
        \int_{a_j}^{b_j}\|\gamma_j'(t)\|\,dt,
    \end{equation*}
which proves the inequality in (\ref{th:calib-boundary}).

If $\gamma_j$ is contained in the jump set of $\St$ we distinguish two cases.
    \begin{itemize}
        \item Let us assume that, for some integer $k$, the curve $\gamma_j$ is contained in the part of the vertical line $x=k$ that separates the values $k-1$ and $k+1$. In this case, up to a reparametrization of the curve, we can assume that $\gamma_j(t)=(k,t)$, with $t$ in some interval $[a_j,b_j]$, while $z_{r,j}=k+1$ and $z_{\ell,j}=k-1$. 
        
        Under these assumptions, from (\ref{defn:calibration-1d}) and (\ref{defn:cubic}) we obtain that
        \begin{equation*}
            \langle E_j(t),\gamma_j'(t)\rangle=
            \Ft(k,k+1)-\Ft(k,k-1)=
            \alpha_\theta\cdot 2^\theta,
        \end{equation*}
        and therefore from (\ref{eqn:int-gamma}) we deduce the equality in (\ref{th:calib-boundary}).

        \item  Let us assume that, for some integer $k$, the curve $\gamma_j$ is contained in the increasing branch of $\ft$ that separates the values $2k$ and $2k+1$ (the case where it is contained in the decreasing branch that separates $2k-1$ and $2k$ is analogous because of the symmetries of $\At$ and $\ft$). In this case, up to a reparametrization of the curve, we can assume that $\gamma_j(t)=(t,\ft(t))$, with $t$ in some interval $[a_j,b_j]\subseteq [2k,2k+1]$, while $z_{r,j}=2k+1$ and $z_{\ell,j}=2k$. 
        
        Under these assumptions, from the 2-periodicity of $\ft$ and (\ref{defn:ft}) we obtain that
        \begin{equation*}
            \gamma_j'(t)=(1,\ft'(t))=(1,\ft'(t-2k))=
            \left(1,\frac{\gt(t-2k)}{\sqrt{\alpha_\theta ^2 -\gt(t-2k)^2}}\right).
        \end{equation*}

        From the definition of $\Ft$, and (\ref{eq:g=F1-F0}) with $x=t-2k$, we obtain that
        \begin{equation*}
            \Ft(t,2k+1)-\Ft(t,2k)=
            \Ft(t-2k,1)-\Ft(t-2k,0)=
            \gt(t-2k).
        \end{equation*}
        
        From the $(2,2)$-periodicity of $\At$ it follows that
        \begin{equation*}
            \At(t,2k+1)-\At(t,2k)=
            \At(t-2k,1)-\At(t-2k,0),
        \end{equation*}
        and hence, from (\ref{hp:ABV=}) and (\ref{hp:ABV-sign}) with $x=t-2k$, we obtain that
        \begin{equation*}
            \At(t,2k+1)-\At(t,2k)=\sqrt{\alpha_\theta ^2-\gt(t-2k)^2}.
        \end{equation*}

        From all these relations we deduce that
         \begin{equation*}
            \langle E_j(t),\gamma_j'(t)\rangle=
            \sqrt{\alpha_\theta ^2-\gt(t-2k)^2}+\frac{\gt(t-2k)^2}{\sqrt{\alpha_\theta ^2-\gt(t-2k)^2}}=
            \alpha_\theta\|\gamma_j'(t)\|.
        \end{equation*}

        Plugging this equality into (\ref{eqn:int-gamma}), and recalling that now $|z_{r,j}-z_{\ell,j}|=1$, we conclude that we have equality in (\ref{th:calib-boundary}).
    \end{itemize}

\paragraph{\textit{\textmd{Inequality for piecewise constant competitors}}}

Let $\Omega$ be a regular open set such that the canonical bi-staircase $\St$ belongs to $PC(\Omega)$. We claim that
\begin{equation*}
    \JF(\Omega, v) \geq
    \mathbb{G}(\Omega, v) =
    \mathbb{G}(\Omega, \St) =
    \JF(\Omega, \St)
\end{equation*}
for every $v \in PC(\Omega)$ that coincides with $\St$ in a neighborhood of $\partial\Omega$, where $\JF$ denotes the functional~(\ref{defn:JF-dd}) with parameters given by~(\ref{defn:abxi-simple}). Indeed:
\begin{itemize}
    \item the first inequality follows from the representation~(\ref{eqn:G1}), since the inequalities~(\ref{th:calibr-fidelity}) and~(\ref{th:calib-boundary}) imply that each term in~(\ref{eqn:G1}) is less than or equal to the corresponding contribution to $\JF(\Omega, v)$ from the relevant level or jump set;

    \item the middle equality holds because $\St$ and $v$ coincide in a neighborhood of $\partial\Omega$;

    \item the final equality follows again from the representation~(\ref{eqn:G1}), now applied to the function $\St$, since in this case equalities hold in both~(\ref{th:calibr-fidelity}) and~(\ref{th:calib-boundary}).
\end{itemize}

This suffices to show that $\St$ is a minimizer among all piecewise constant competitors in the regular open set $\Omega$.

\paragraph{\textit{\textmd{Inequality for general competitors}}}

It remains to consider a generic function $v \in \PJ(\Omega)$ that coincides with $\St$ in a neighborhood of $\partial\Omega$. In this case, one can approximate $v$ by a sequence $\{v_n\} \subseteq PC(\Omega)$ such that each $v_n$ coincides with $\St$ in a neighborhood of $\partial\Omega$, and such that $v_n \to v$ in $L^2(\Omega)$ and
\begin{equation*}
    \lim_{n \to +\infty} \int_{S_{v_n}} |v_n^+(x) - v_n^-(x)|^\theta\, d\mathcal{H}^1 =
    \int_{S_v} |v^+(x) - v^-(x)|^\theta\, d\mathcal{H}^1,
\end{equation*}
so that in particular $\JF(\Omega, v_n) \to \JF(\Omega, v)$ as $n \to +\infty$.

Approximation results of this type, without the equality condition near the boundary, can be found in~\cite{2014-M3AS-BelChaGol} for the case $\theta \in (0,1)$, and in~\cite{density-partitions} for the case $\theta = 0$. However, in both cases the constructions can be adapted to produce a sequence that satisfies this additional condition. This is particularly feasible in our setting, since we know that $v$ is well-behaved in a neighborhood of the boundary of $\Omega$, where it coincides with $\St$.

At this point, the conclusion is straightforward. From the previous paragraph, we know that $\JF(\Omega, v_n) \geq \JF(\Omega, \St)$ for every $n$. Passing to the limit, we obtain 
$$\JF(\Omega, v) \geq \JF(\Omega, \St).$$ 

Since regular open sets where $\St$ is piecewise constant exhaust the whole plane, this completes the proof.
\end{proof}


\begin{rmk}[Comparison with~\cite{2003-CalcVar-ABDM}]\label{rmk:ABDM-2d}
\begin{em}

We point out that in Proposition~\ref{prop:calibr-dd} we did not assume any differentiability of $\At$, and its continuity was used only to give meaning to the line integrals of the form~(\ref{defn:omega-z}). If $\At$ is more regular, one can define a vector field $\Phi$ as the curl of $(\At, \Ft, 0)$, namely
\begin{equation*}
\Phi(x,y,z) := \left(
-\frac{\partial \Ft}{\partial z}(x,z), 
\frac{\partial \At}{\partial z}(x,z), 
\frac{\partial \Ft}{\partial x}(x,z)
\right).
\end{equation*}

In this way, the terms in the first sum of~(\ref{eqn:G1}) can be interpreted as the flux of $\Phi$ across the horizontal parts of the graph of $v$, while the terms in the second sum correspond to the flux across the ``vertical parts'' of the graph, associated with jumps. In other words, in this case $\mathbb{G}(\Omega,v)$ coincides with the null Lagrangian introduced in~\cite{2003-CalcVar-ABDM}.

\end{em}    
\end{rmk}

\paragraph{\textit{\textmd{Step 4 -- Construction of a special piecewise affine function}}}

\begin{lemma}\label{lemma:psi}

Let $-2\leq c\leq 0\leq d\leq 2$ be two real numbers, with $c<d$. Let us set
\begin{equation}
    C:=\sqrt{16-(c-2)^2},
    \qquad\qquad
    D:=\sqrt{16-(d-c)^2}-\sqrt{16-(2-c)^2},
    \label{defn:CD}
\end{equation}
and let $\psi:\re\to\re$ be the function such that
\begin{enumerate}
\renewcommand{\labelenumi}{(\roman{enumi})}
    \item $\psi(c)=-C$ and $\psi(d)=D$,

    \item $\psi(\sigma)=0$ for every $\sigma\leq -2$ and for every $\sigma\geq 2$,

    \item $\psi$ is an affine function in each of the intervals $[-2,c]$, $[c,d]$, and $[d,2]$ (the first and last interval might be a single point).
\end{enumerate}

Then the function $\psi$ satisfies the equality
\begin{equation}
    (\psi(d)-\psi(c))^2+(d-c)^2=16
    \qquad\quad\text{with}\quad\qquad
    \psi(d)>\psi(c),
    \label{th:psi=}
\end{equation}
and the inequality
\begin{equation}
    (\psi(\sigma_2)-\psi(\sigma_1))^2+(\sigma_2-\sigma_1)^2\leq16
    \qquad
    \forall (\sigma_1,\sigma_2)\in[-2,2]^2.
    \label{psi<=}
\end{equation}
    
\end{lemma}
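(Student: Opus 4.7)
The equality (\ref{th:psi=}) will follow directly from the definitions: one has $\psi(d)-\psi(c)=D+C=\sqrt{16-(d-c)^2}$, because the term $\sqrt{16-(2-c)^2}$ entering $D$ cancels with $C$. Hence $(\psi(d)-\psi(c))^2+(d-c)^2=16$, and $\psi(d)>\psi(c)$ as long as $|d-c|<4$, that is, outside the degenerate case $c=-2$, $d=2$.

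For (\ref{psi<=}), the plan is to exploit the piecewise affine structure of $\psi$. Since $\psi$ is affine on each of the intervals $[-2,c]$, $[c,d]$, $[d,2]$, the map $f(\sigma_1,\sigma_2):=(\sigma_2-\sigma_1)^2+(\psi(\sigma_2)-\psi(\sigma_1))^2$ is, on each cell of the grid in $[-2,2]^2$ determined by the lines $\sigma_i\in\{c,d\}$, a sum of squares of affine functions of $(\sigma_1,\sigma_2)$, hence convex. Its maximum on the rectangle is therefore attained at a vertex of the grid, so it suffices to verify $f\leq 16$ for pairs $(\sigma_1,\sigma_2)\in\{-2,c,d,2\}^2$; by the symmetry $f(\sigma_1,\sigma_2)=f(\sigma_2,\sigma_1)$ we may further assume $\sigma_1\leq\sigma_2$.

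Four of the six pairs with $\sigma_1<\sigma_2$ yield either $16$ or a trivial bound: $f(-2,c)=(c+2)^2+C^2=8(c+2)\leq 16$ since $c\leq 0$; $f(-2,2)=16$; $f(c,d)=(d-c)^2+(D+C)^2=16$ by the identity used for (\ref{th:psi=}); and $f(c,2)=(2-c)^2+C^2=16$. For the pair $(-2,d)$, setting $u:=c+2$, $v:=2-d$, $w:=u+v$, and expanding $D^2=w(8-w)+u(8-u)-2\sqrt{uw(8-u)(8-w)}$, a direct simplification reduces $(d+2)^2+D^2\leq 16$ to $u(8-w)\leq\sqrt{uw(8-u)(8-w)}$; squaring (both sides being nonnegative), this becomes $u\leq w$, equivalent to $v\geq 0$, hence true since $d\leq 2$.

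The main obstacle is the pair $(d,2)$. I plan to introduce the trigonometric parametrization $d-c=4\sin\alpha$ and $2-c=4\sin\beta$ with $0\leq\alpha\leq\beta\leq\pi/2$, under which a short computation gives $(2-d)^2+D^2=32(1-\cos(\beta-\alpha))$, so the desired bound is equivalent to $\beta-\alpha\leq\pi/3$. The constraint $d\geq 0$ translates into $\sin\beta-\sin\alpha\leq 1/2$, or equivalently, by sum-to-product, $2\cos((\alpha+\beta)/2)\sin((\beta-\alpha)/2)\leq 1/2$. I expect to conclude by contradiction: if $\beta-\alpha>\pi/3$, then $\sin((\beta-\alpha)/2)>1/2$, which together with the constraint forces $\cos((\alpha+\beta)/2)<1/2$, hence $\alpha+\beta>2\pi/3$; combined with $\beta-\alpha>\pi/3$, this yields $2\beta>\pi$, contradicting $\beta\leq\pi/2$.
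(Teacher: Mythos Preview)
Your proof is correct and shares the paper's overall strategy: reduce (\ref{psi<=}) to the grid vertices $\{-2,c,d,2\}$ via convexity of $(\sigma_1,\sigma_2)\mapsto f(\sigma_1,\sigma_2)$ on each rectangle, then check the six ordered pairs. Your algebraic substitution $u=c+2$, $v=2-d$, $w=u+v$ for the pair $(-2,d)$ is a clean alternative to the paper's argument, which instead rewrites the inequality as
\[
16+(d+2)^2\leq (d-c)^2+(2-c)^2+2\sqrt{16-(d-c)^2}\cdot\sqrt{16-(2-c)^2},
\]
shows that the right-hand side is monotone increasing in $c$ on $[-2,0]$ by computing its derivative, and then checks the endpoint $c=-2$ where it becomes an equality.

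The one place where you do extra work is the pair $(d,2)$, which you call ``the main obstacle'' and handle by a separate trigonometric parametrization $d-c=4\sin\alpha$, $2-c=4\sin\beta$. The paper disposes of this case in one line: since $d\geq 0$, one has $(2-d)^2\leq(d+2)^2$, so the inequality $D^2+(2-d)^2\leq 16$ for the pair $(d,2)$ follows immediately from $D^2+(d+2)^2\leq 16$, which is the pair $(-2,d)$ that you have already established. Your trigonometric argument is correct and rather pretty, but once you have $(-2,d)$ in hand, $(d,2)$ is free.
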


\begin{proof}

The verification of (\ref{th:psi=}) is immediate from (\ref{defn:CD}). So let us concentrate on the inequality (\ref{psi<=}). Due to the piecewise definition of $\psi$, we should a priori consider nine cases according to the position of $\sigma_1$ and $\sigma_2$ with respect to $c$ and $d$. On the other hand, since $\psi$ is a piecewise affine function, the left-hand side of (\ref{psi<=}) is a convex function of both $\sigma_1$ and $\sigma_2$ in each of the intervals $[-2,c]$, $[c,d]$, and $[d,2]$. As a consequence, we can reduce ourselves to check the inequality when $\sigma_1$ and $\sigma_2$ are endpoints of these intervals, and therefore we are left with the six cases shown in the following table.

\begin{center}
\renewcommand{\arraystretch}{1.3}
\begin{tabular}{|c||c|c|c|}
\hline
Case & $\sigma_1$ & $\sigma_2$ & Inequality to check
\\
\hline\hline
1 & $-2$ & $c$ & $C^2+(c+2)^2\leq16$
\\
\hline
2 & $-2$ & $d$ & $D^2+(d+2)^2\leq16$
\\
\hline
3 & $-2$ & $2$ & $16\leq16$
\\
\hline
4 & $c$ & $d$ & $(D+C)^2+(d-c)^2\leq16$
\\
\hline
5 & $c$ & $2$ & $C^2+(2-c)^2\leq16$
\\
\hline
6 & $d$ & $2$ & $D^2+(2-d)^2\leq16$
\\
\hline
\end{tabular}

\end{center}

Let us examine the six cases.
\begin{itemize}
    \item The inequality of case~3 is immediate.

    \item  Since $c\leq 0$, the inequality of case~1 is satisfied whenever the inequality of case~5 is satisfied, and the latter is an equality due to the definition of $C$ in (\ref{defn:CD}).

    \item  The inequality of case~4 is an equality because of (\ref{defn:CD}).

    \item  Since $d\geq 0$, the inequality of case~6 is satisfied whenever the inequality of case~2 is satisfied.
\end{itemize}

As a consequence, we only need to prove the inequality of case~2. Taking (\ref{defn:CD}) into account, with some algebra this inequality reduces to
\begin{equation}
    16+(d+2)^2\leq
    (d-c)^2+(2-c)^2+
    2\sqrt{16-(d-c)^2}\cdot\sqrt{16-(2-c)^2}.
    \label{est:lemma-main}
\end{equation}

Let us consider the right-hand side as a function of $c$. For every $c\in(-2,d)$ its derivative with respect to $c$ is equal to
\begin{equation*}
    2\left(\sqrt{16-(d-c)^2}-\sqrt{16-(2-c)^2}\right)
    \left(\frac{2-c}{\sqrt{16-(2-c)^2}}-
    \frac{d-c}{\sqrt{16-(d-c)^2}}\right).
\end{equation*}

Now we observe that 
$0<d-c\leq 2-c< 4$
for every $c\in(-2,0)$, and the function $x\mapsto x/\sqrt{16-x^2}$ is increasing in $(0,4)$. It follows that the derivative is positive for every $c\in(-2,0)$, and hence the right-hand side of (\ref{est:lemma-main}), as a function of $c$, is increasing in the interval $[-2,0]$. As a consequence, it is enough to check (\ref{est:lemma-main}) when $c=-2$, in which case it reduces to an equality.
\end{proof}

\paragraph{\textit{\textmd{Step 5 -- Construction of the calibration for $\theta=0$}}}

Let us specialize now to the case $\theta=0$. In this case from (\ref{defn:abxi-simple}) we get $\alpha_0=4$, while from (\ref{defn:cubic-true}) and (\ref{defn:cubic}) we obtain that
\begin{equation*}
    \phihat_0(x)=\varphi_0(x)=3x-x^3
    \qquad
    \forall x\in[-1,1],
\end{equation*}
so that in particular
\begin{equation*}
    \varphi_0(-x)=x^3-3x
    \qquad\quad\text{and}\quad\qquad
    \varphi_0(1-x)=x^3-3x^2+2
\end{equation*}
for every $x\in[0,1]$. It follows that
$$-2\leq\varphi_0(-x)\leq 0\leq \varphi_0(1-x)\leq 2\qquad \mbox{and}\qquad \varphi_0(-x)<\varphi_0(1-x),$$
and hence we can apply Lemma~\ref{lemma:psi} with
\begin{equation}\label{defn:cd}
    c:=\varphi_0(-x)
    \qquad\quad\text{and}\quad\qquad
    d:=\varphi_0(1-x).
\end{equation}

We obtain a piecewise affine function that now we denote by $\psi(x,\sigma)$ in order to highlight that it depends also on $x$. Finally, we set
\begin{equation*}
    A_0(x,z):=\psi(x,\phihat_0(z-x))
    \qquad
    \forall x\in[0,1],
    \quad
    \forall z\in\re.
\end{equation*}

We claim that this function satisfies the assumptions of Proposition~\ref{prop:calibr-dd}, and therefore it is exactly what we need for the calibration method.

\begin{itemize}
    \item We check that this function satisfies (\ref{hp:ABV=}), (\ref{hp:ABV-sign}) and (\ref{hp:ABV<}) for $\theta=0$. To this end, it is enough to observe that the function $F_0$ defined according to (\ref{defn:calibration-1d}) satisfies
\begin{equation*}
    F_0(x,z_2)-F_0(x,z_1)=
    \phihat_0(z_2-x)-\phihat_0(z_1-x)
\end{equation*}
for every $x\in[0,1]$ and every pair $(z_1,z_2)\in\re^2$, and therefore the three required relations are exactly the three properties of the function $\psi(x,\sigma)$ provided by Lemma~\ref{lemma:psi}.

\item  The continuity of $A_0$ follows from the continuity of $\phihat_0$, the continuity of $c$ and $d$ in (\ref{defn:cd}) with respect to $x$, and the continuity of $\psi$ with respect to $c$, $d$, and $\sigma$. The latter follows from the piecewise affine definition of $\psi$ in Lemma~\ref{lemma:psi}, with particular attention to the cases where $c \to -2^+$ and/or $d \to 2^-$, in which case the corresponding values $C$ and $D$ tend to $0$.

\item Finally, we observe that for $x = 0$ we have $c = 0$ and $d = 2$, while for $x = 1$ we have $c = -2$ and $d = 0$. In both cases, one of the three intervals where $\psi$ is affine degenerates to a single point, while the other two intervals are $[-2,0]$ and $[0,2]$. As a consequence, the functions $\sigma \mapsto \psi(0,\sigma)$ and $\sigma \mapsto \psi(1,\sigma)$ are even. Since $\phihat_0$ is an odd function, it follows that
\begin{equation*}
    A_0(0,z)=
    \psi(0,\phihat_0(z))=
    \psi(0,\phihat_0(-z))=
    A_0(0,-z),
\end{equation*}
which proves the first equality in (\ref{eq:ext_cont}). Analogously, we obtain that
\begin{equation*}
    A_0(1,-z)= 
    \psi(1,\phihat_0(-z-1))=
    \psi(1,\phihat_0(z+1))=
    A_0(1,z+2),
\end{equation*}
which proves the second equality in (\ref{eq:ext_cont}).

\end{itemize}

\begin{rmk}[Dimension one vs dimension two]
\begin{em}

Let us highlight a notable difference between our calibrations in one and two dimensions. In the one-dimensional case, the function $\Ft$ that we constructed was sufficient to calibrate simultaneously all oblique translations of the canonical staircase. In contrast, the calibration we defined in two dimensions successfully calibrates the canonical staircase and all its oblique translations (where the component $A_0$ plays no role), as well as the bi-staircase of Definition~\ref{defn:bi-staircase} and all its translations in the $y$-direction. However, it does \emph{not} calibrate oblique translations of the bi-staircase. We suspect that it is not possible to calibrate in the same time all oblique translations of the bi-staircase.

\end{em}
\end{rmk}

\section{Open problems}\label{sec:open}

In this final section we mention some open problems concerning entire local minimizers for (\ref{defn:JF-dd}).

\begin{open}[More general exponents]

    Extend statement~(2) of Theorem~\ref{thm:main-dd} to all exponents $\theta\in(0,1)$.
    
\end{open}

To this end, it would suffice to show that the canonical bi-staircase introduced in Definition~\ref{defn:bi-staircase} is an entire local minimizer also for $\theta \in (0,1)$. This, in turn, would follow from the existence of a function $\At$ satisfying the assumptions of Proposition~\ref{prop:calibr-dd}. The construction we provided for $\theta = 0$ can be extended to more general values $\theta \in (0,1)$, but the proof is simpler when $\theta = 0$ because the right-hand side of~(\ref{hp:ABV<}) reduces to a constant, allowing us to exploit the convexity argument used in the proof of Lemma~\ref{lemma:psi}. Some basic numerical experiments seem to suggest that the construction works also for other values of $\theta$.

The second open problem concerns the existence of less symmetric entire local minimizers. This question is motivated by the fact that the canonical bi-staircase of Definition~\ref{defn:bi-staircase} retains a certain degree of symmetry, as is evident from Figure~\ref{fig:bi-staircase}.

\begin{open}[Asymmetric exotic minimizers]

    Determine whether there exists an entire local minimizer for (\ref{defn:JF-dd}), with parameters as in (\ref{defn:abxi-simple}), that is asymptotic for $y\to +\infty$ to the staircase with values $2k$, and for $y\to -\infty$ to the staircase with values $2k+\tau_0$, for some $\tau_0\in(-1,1)$.
    
\end{open}

All the considerations of Remark~\ref{rmk:bi-staircase} still apply, even in the case of asymmetric minimizers. More precisely, the curve that separates the region with value $a$ above from the region with value $b$ below is the graph of a function $f_\theta$ that satisfies
\begin{equation*}
\alpha_\theta |b-a|^\theta \left(\frac{f_\theta'(x)}{\sqrt{1 + f_\theta'(x)^2}}\right)' =
3\left[(b - x)^2 - (a - x)^2\right],
\end{equation*}
and, once again, the values of $f_\theta'$ at the endpoints must be chosen so that the weighted sum of the three tangent vectors at each triple point vanishes. We observe, however, that in this less symmetric setting there is no reason why the separation between the values $c$ and $c+2$, in either the upper or lower region, should be a half-line.

The next open question addresses the full characterization of entire local minimizers.

\begin{open}[Characterization of local minimizers]

    Find the set of all entire local minimizer for (\ref{defn:JF-dd}).
    
\end{open}

In Theorem~\ref{thm:main-1d}, we answered the corresponding question in the one-dimensional case, but the arguments used in the proof appear to be quite specific to dimension one. In higher dimensions, we are not able to answer this question, which seems to be nontrivial even in the case where the forcing term is identically zero (cf.~Remark~\ref{rmk:M=0}). Incidentally, this highlights a drawback of the calibration method: while it is often a powerful tool for verifying that a given candidate is a minimizer, it appears ineffective for ruling out the existence of alternative minimizers.

As explained in the introduction, characterizing all entire local minimizers of (\ref{defn:JF-dd}) was the original motivation for this research, as the problem arises naturally in the study of the asymptotic behavior of minimizers for certain regularizations of the Perona–Malik functional. Our initial conjecture was that the only minimizers were standard simple staircases, but we now know this is not the case. For this reason, we conclude the paper by asking whether our exotic minimizers have any relevance for the models that originally motivated this work.

\begin{open}[Back to the Perona-Malik functional]

    Determine whether exotic entire local minimizers can emerge as limits of blow-ups of minimizers for the higher dimensional versions of (\ref{defn:SPMF}) or (\ref{defn:DPMF}).
    
\end{open}


\subsubsection*{\centering Acknowledgments}
The authors are members of the {\selectlanguage{italian} ``Gruppo Nazionale per l'Analisi Matematica, la Probabilità e le loro Applicazioni''} (GNAMPA) of the {\selectlanguage{italian}``Istituto Nazionale di Alta Matematica''} (INdAM).
The authors acknowledge the MIUR Excellence Department Project awarded to the Department of Mathematics, University of Pisa, CUP I57G22000700001.

%
%
%
%



\end{document}